\documentclass[prerint,11pt]{article}

\usepackage{amssymb,amsfonts,amsmath,amsthm,amscd,dsfont,mathrsfs}
\usepackage{graphicx,float,psfrag,epsfig,color}
\usepackage{hyperref}

\footnotesep 14pt
\floatsep 27pt plus 2pt minus 4pt      
\textfloatsep 40pt plus 2pt minus 4pt
\intextsep 27pt plus 4pt minus 4pt

\topmargin -0.2in
\headsep 0.15in
\textheight 8.5in
\oddsidemargin -0.07in
\evensidemargin -0.07in
\textwidth 6.6in

\newtheorem{claim}{Claim}
\newtheorem{lemma}{Lemma}

\newtheorem{theorem}{Theorem}
\newtheorem{proposition}[claim]{Proposition}

\newtheorem{definition}[claim]{Definition}

\newtheorem{remark}{Remark}


\definecolor{Red}{rgb}{1,0,0}
\definecolor{Blue}{rgb}{0,0,1}
\definecolor{Olive}{rgb}{0.41,0.55,0.13}
\definecolor{Green}{rgb}{0,1,0}
\definecolor{MGreen}{rgb}{0,0.8,0}
\definecolor{DGreen}{rgb}{0,0.55,0}
\definecolor{Yellow}{rgb}{1,1,0}
\definecolor{Cyan}{rgb}{0,1,1}
\definecolor{Magenta}{rgb}{1,0,1}
\definecolor{Orange}{rgb}{1,.5,0}
\definecolor{Violet}{rgb}{.5,0,.5}
\definecolor{Purple}{rgb}{.75,0,.25}
\definecolor{Brown}{rgb}{.75,.5,.25}
\definecolor{Grey}{rgb}{.5,.5,.5}
\definecolor{Pink}{rgb}{1,0,1}
\definecolor{DBrown}{rgb}{.5,.34,.16}

\definecolor{Black}{rgb}{0,0,0}

\definecolor{White}{rgb}{1,1,1}
\def\white{\color{White}}


\def\sT{{\sf T}}
\def\id{{\rm I}}

\def\tq{\tilde{q}}
\def\Space{{\cal V}_{q,N}}
\def\nSpace{{\cal V}_{q,n}}
\def\mSpace{{\cal V}_{q,m}}
\def\cF{{\cal F}}

\def\reals{{\mathbb R}}
\def\naturals{{\mathbb N}}
\def\<{\langle}
\def\>{\rangle}
\def\E{{\mathbb E}}
\def\tA{\tilde{A}}
\def\normal{{\sf N}}

\def\iid{\textrm{i.i.d. }}

\def\he{\widehat{e}}
\def\hh{\widehat{h}}

\def\vv{\mathbf{v}}
\def\vw{\mathbf{w}}
\def\vx{\mathbf{x}}
\def\vm{\mathbf{m}}

\def\Ens{{\cal M}}
\def\Rows{{\sf R}}
\def\Cols{{\sf C}}
\def\limsup{\rm{lim \, sup}}
\def\liminf{\rm{lim \, inf}}
\def\gr{{\sf g}}
\def\mmse{{\sf mmse}}

\def\Lr{{L_r}}
\def\Lc{{L_c}}
\def\tQ{\widetilde{Q}}
\def\teta{\tilde{\eta}}
\def\matB{\mathscr{B}}

\def\no{\nonumber}
\def\uRenyi{\overline{d}}

\def\Om{{\bf B}}

\def\v1{\mathbf{1}}
\def\vu{\mathbf{u}}
\def\vgamma{\mathbf{\gamma}}
\def\vy{\mathbf{y}}
\def\vz{\mathbf{z}}
\def\va{\mathbf{a}}

\def\vw{\mathbf{w}}

\def\vZ{\mathbf{Z}}

\newcommand{\pl}{\parallel}
\def\va{\vec{\alpha}}

\newcommand{\mc}{\mathcal}
\newcommand{\deq}{\stackrel{\text{\rm d}}{=}}
\newcommand{\asequal}{\stackrel{\almostsurely}{=}}
\newcommand{\almostsurely}{{\rm a.s.}}
\newcommand{\order}{\vec{o}}

\newcommand{\indicator}{\mathbb{I}}
\newcommand{\cov}{\textrm{\rm Cov}}
\newcommand{\pty}{\mc}
\newcommand{\psL}{\mathrm{PL}}

\newcommand{\empr}{\hat{p}}
\def\E{{\mathbb E}}

\def\cF{{\cal F}}

\def\ons{{\sf b}}
\def\pons{{\sf d}}
\def\Ons{{\sf B}}
\def\POns{{\sf D}}

\def\tx{\tilde{x}}
\def\tr{\tilde{r}}

\def\de{{\rm d}}
\def\eps{{\varepsilon}}

\def\Var{{\rm Var}}

\def\hSigma{\widehat{\Sigma}}

\newcommand{\sigal}{\mathfrak}
\newcommand{\tM}{\tilde{M}}
\newcommand{\tm}{\tilde{m}}
\newcommand{\f}{\frac}
\def\lbq{\rho}
\def\nphi{\nabla \varphi}



\newcommand{\BEAS}{\begin{eqnarray*}}
\newcommand{\EEAS}{\end{eqnarray*}}
\newcommand{\BEA}{\begin{eqnarray}}
\newcommand{\EEA}{\end{eqnarray}}
%
%

\begin{document}
\title{State Evolution for General Approximate Message Passing
  Algorithms, with Applications to Spatial Coupling}

\author{Adel Javanmard\footnote{Department of Electrical Engineering, Stanford
    University}  \quad and \quad Andrea Montanari
            \footnote{Department of Electrical Engineering and Department of Statistics, Stanford University}}

\maketitle

%
%
\begin{abstract}
We consider a class of approximated message passing (AMP)
algorithms and characterize their high-dimensional
behavior in terms of a suitable state evolution recursion. Our proof
applies to Gaussian matrices with independent but not necessarily
identically distributed entries. It covers --in particular-- the
analysis of generalized AMP, introduced by Rangan, and 
of AMP reconstruction in compressed sensing with spatially coupled
sensing matrices.

The proof technique builds on the one of \cite{BM-MPCS-2011}, while 
simplifying and generalizing several steps.
\end{abstract}
%
%
\section{Introduction}

Approximate message passing (AMP) algorithms \cite{DMM09} apply ideas from graphical
models (belief propagation \cite{Pearl}) and statistical physics (mean field or TAP
equations \cite{SpinGlass,MezardMontanari}) to statistical
estimation. In particular AMP applies to problems that  do not admit a sparse  graphical model description.
An AMP algorithm takes the form
\begin{eqnarray}
u^{t} &= & A\, f(v^t;t)  - \ons_t \, g(u^{t-1};t-1)\, ,\label{eq:FirstAMP1}\\
v^{t+1} & = &A^{\sT}\, g(u^t;t) - \pons_t\,  f(v^t;t)\, ,\label{eq:FirstAMP2}
\end{eqnarray}
with $t\in \naturals$ being the iteration number.
Here $v^t\in\reals^n$, $u^t\in\reals^m$ are vectors that describe the
algorithm's state, $f(\,\cdot\,;t):\reals^n\to\reals^n$ and
$g(\,\cdot\,;t):\reals^m\to\reals^m$ are sequences of functions that
can be computed efficiently and $\ons^t$, $\pons^t$ are scalars that
also can be computed given the current state. Finally
$A\in\reals^{m\times n}$ is a matrix that is given as part of the data of the
estimation problem. 

One domain in which AMP  finds application is  the ubiquitous problem
of estimating an unknown signal $x\in\reals^n$ from noisy linear observations:
\begin{eqnarray}
y = A\, x+w\, .\label{eq:LinearModel}
\end{eqnarray}
Here $A\in\reals^{m\times n}$ is a known sensing matrix and $w\in\reals^m$
is a noise vector with \iid components with $\E w_i=0$,
$\E\{w_i^2\}=\sigma^2$. In \cite{DMM09} a class of  AMP algorithms was
developed for this problem in the compressed sensing setting in which
$x$ is sparse and $m<n$. Several generalization --for instance to
signals with small total variation-- were developed in
\cite{DonohoJohnstoneMontanari}, which also provides a more complete
list of references. All of these generalizations can be recast on the
form of Eqs.~(\ref{eq:FirstAMP1}), (\ref{eq:FirstAMP2}) for suitable choices of the
functions $f(\,\cdot\,;t)$ and $g(\,\cdot\,;t)$.

A striking property of AMP algorithms is that their high-dimensional
behavior admits an \emph{exact description}. Simplifying, for a
broad range of random matrices $A$, the vectors
$u^t$, $v^t$ have asymptotically \iid Gaussian entries in the limit $n,m\to\infty$ at
$t$ fixed (see next section for a formal statement). The variance of
$u^t_i$, $v^t_i$ can be computed through a one-dimensional recursion
termed \emph{state evolution}, because of its analogy with density
evolution in coding theory \cite{RiU08}. The predictions of state
evolution were tested numerically in several papers, see e.g.
\cite{DMM09,DMM-NSPT-11,DonohoJohnstoneMontanari,SchniterTurbo,RanganQuantized,krzakala2012probabilistic,som2012compressive,JavanmardMon12}.
In \cite{BM-MPCS-2011} it was proved that state evolution does indeed
hold if $A$ has \iid Gaussian entries and the functions
$f(\,\cdot\,;t)$ and $g(\,\cdot\,;t)$ are Lipschitz continuous and
separable\footnote{Throughout the paper we say that
  $h:\reals^k\to\reals^k$ is separable if $h(x_1,x_2,\dots,x_k) = (h_1(x_1),h_2(x_2),\dots,h_k(x_k))$.}. This result was extended in
\cite{BM-Universality}  to matrices $A$ that have independent
non-Gaussian entries, under the assumption that functions
$f(\,\cdot\,;t)$ and $g(\,\cdot\,;t)$ are separable polynomials.
On the basis of these results, it is natural to conjecture that state
evolution holds  for matrices with general independent entries, whenever 
$f(\,\cdot\,;t)$ and $g(\,\cdot\,;t)$ are separable and locally
Lipschitz with polynomial growth. This conjecture is still
open. 

In this paper we focus on Gaussian matrices and consider a different
type of generalization that was motivated by the following recent
developments.
\begin{description}
\item[Generalized AMP.] In \cite{RanganGAMP}, Rangan proposed a class of
generalized message passing algorithms (G-AMP) which found several  interesting
applications, see \cite{fletcher2011neural,kamilov2012one}. In
particular, generalized AMP allows to tackle  nonlinear
estimation problems wherein $x\in\reals^n$ is to be estimated from
observations $Y = (Y_1,\dots,Y_m)$. Observations are conditionally
independent given $A$ and $x$, with $Y_i$ distributed according
to a model  $p(\,\cdot\,|\xi_i)$ with $\xi_i =
(A x)_i$. Considering for simplicity the case in which
$p(\,\cdot\,|\xi_i)$ has a density (denoted again by $p$), the joint
density of $Y = (Y_1,\dots,Y_m)$ is therefore
\begin{eqnarray}
p_Y(y|A,x) = \prod_{i=1}^m p\big(y_i|(A x)_i\big)\, .
\end{eqnarray}
In information theory parlance, the vector $(A
x)$ is passed through a memoryless channel with transition probability
$p(\,\cdot\,|\,\cdot\,)$. 
From a statistics point of view, this corresponds to estimation of a
generalized linear model~\cite{Neld:Wedd:1972,McCu:Neld:1989}.
The linear model (\ref{eq:LinearModel}) is recovered as the special case in
which the channel is Gaussian or --more generally-- the noise is
purely additive.
Rangan conjectured that suitable state evolution equations hold for
G-AMP algorithms as well, without however providing a formal proof.
\item[Spatial coupling.] In a separate line of work, Donoho and the present authors \cite{DJM} applied AMP
to compressed sensing reconstruction with spatially coupled sensing
matrices. This type of sensing matrices were developed in \cite{KrzakalaEtAl} (see also \cite{KudekarPfister} for
earlier work in this direction), who demonstrated heuristically the
power of this approach. 
A mathematical analysis requires extending state evolution
to matrices with independent centered Gaussian entries, although with
non-identical variances (heteroscedastic entries, in the statistics
terminology). More precisely, for $A\in\reals^{m\times n}$ we assume 
that the row index set $[m] = \{1,\dots,m\}$ is partitioned into $q$
groups, and that the same holds for the column index set $[n] =
\{1,\dots,n\}$.
Then the entries $A_{ij}$ are independent Gaussian with mean
$\E\{A_{ij}\}=0$  and variance $\E\{A_{ij}^2\}$ depending on the
group to which $i$ and $j$ belong.
Spatially coupled sensing matrices correspond to a special
band-diagonal structure of the block variances.

 A rigorous analysis of the implications of state evolution for
 spatially coupled matrices can be found in  \cite{DJM}.
In particular, \cite{DJM} studied a class of spatially coupled
matrices, and proved that AMP reconstruction achieves the
information-theoretic limit stated in~\cite{WuVerdu}. 
More specifically, for sequences of spatially coupled matrices $A \in
\reals^{m\times n}$ with asymptotic undersampling rate 
$\delta = \lim_{n\to \infty} m/n$, AMP reconstructs the signal with
high probability, provided $\delta > \uRenyi(p_X)$, 
where $\uRenyi(p_X)$ denotes the (upper) R\'enyi information dimension
of $p_X$~\cite{Renyi}. Further, AMP reconstruction is robust to noise.
\item[Robust regression.]  Bean, Bickel, El
  Karoui and Yu \cite{bean2012optimal} recently considered the problem
  of estimating the unknown vector $x$ in the linear model
  (\ref{eq:LinearModel}) using robust regression. They developed exact
  asymptotic expressions for the risk that are
  analogous to the one proved in \cite{BayatiMontanariLASSO} for the
  Lasso. The results of \cite{bean2012optimal} are, on the other hand,
  based on an heuristic derivation.

  The proof in \cite{BayatiMontanariLASSO} was based on the
  state evolution analysis of a suitable AMP algorithm whose fixed
  points coincide with the Lasso optima. This is suggestive of a
  possible approach for proving the results of  \cite{bean2012optimal}:
  define a suitable AMP algorithm for solving the robust
  regression problem, and analyze it through state evolution.
  Indeed a comparison of the formulae in \cite{bean2012optimal} with
  the state evolution formulae in \cite{RanganGAMP} appears encouraging.
\end{description}
In this paper we establish a rigorous generalization of state
evolution that covers all of the above developments. Applications to
generalized AMP  are already discussed in \cite{RanganGAMP}, and
applications to spatially coupled sensing matrices can be found in
\cite{DJM} and Section \ref{sec:SpatialCoupling}. Finally,
applications to robust regression are left for future study.

Remarkably, all of the above applications can be derived by treating
the following generalization of  the iteration (\ref{eq:FirstAMP1}),
(\ref{eq:FirstAMP2}). (A formal definition is given in the next section.)
\begin{enumerate}
\item The vectors $u^t\in\reals^{m}$, $v^t\in\reals^n$ are replaced by
  matrices $u^t\in\reals^{m\times q}$, $v^t\in\reals^{n\times q}$,
  with $q$ kept fixed as $m,n\to\infty$.
\item The functions $f,g$ appearing in Eqs.~(\ref{eq:FirstAMP1}),
  (\ref{eq:FirstAMP2}) are now mappings $f(\,\cdot\,;t):\reals^{n\times q}\to \reals^{n\times q}$,
 $g(\,\cdot\,;t):\reals^{m\times q}\to \reals^{m\times q}$ that are
 separable across rows (e.g. the $i$-th row of $f(v;t)$ only depends
 on the $i$-th row on $v$). Correspondingly, the product  $A
 f(v^t;t)$ has to be  interpreted as a matrix multiplication.
\item The memory terms are modified with $\ons_t$, $\pons_t$ replaced
  by $q\times q$ matrices. More specifically, $\ons_t\, g(u^{t-1};t-1)$ and $\pons_t\, f(v^t;t)$
  are respectively replaced by $g(u^{t-1};t-1)\, \Om_t^\sT$, $f(v^{t};t)\, {\bf D}_t^\sT$,
  with $\Om_t, {\bf D}_t \in \reals^{q\times q}$.
\end{enumerate}

Our proof uses the technique of \cite{BM-MPCS-2011}, which in turns
build on an idea first introduced by Bolthausen
\cite{bolthausen2012iterative}. A convenient simplification with
respect to \cite{BM-MPCS-2011} consists in studying a recursion in
which the rectangular matrix $A$ is replaced by a symmetric matrix,
and the algorithm state is described by a single vector. 

In section \ref{sec:MainResult} we put forward formal definitions and
state our main result for the case of symmetric matrices. In section
\ref{sec:SpatialCoupling} we
show how the case of rectangular matrices can be reduced to the
symmetric one. We also show how our result applies to the case of
compressed sensing reconstruction with spatially coupled
matrices. Finally, we prove our main result in Section \ref{sec:Proof}.

\section{Main result}
\label{sec:MainResult}

We will view AMP as operating on the vector space
$\Space \equiv (\reals^q)^N\simeq \reals^{N\times q}$. Given a
vector $x\in \Space$, we shall most often regard  it as an
$N$-vector with entries in $\reals^q$, namely $x =
(\vx_1,\dots,\vx_N)$, with $\vx_i\in\reals^q$.
Components of
$\vx_i\in\reals^q$ will be indicated as $(\vx_i(1),\dotsc,\vx_i(q))\equiv \vx_i$.
For $x \in \Space$, we define its norm by $\|x\| = \left(\sum_{i=1}^N \|\vx_i\|^2\right)^{1/2}$.

Given a matrix $A\in\reals^{N\times N}$, we let it act on
$\Space$ in the natural way, namely for $v', v\in \Space$ we let $v'=Av$ be given by
$\vv'_i = \sum_{j=1}^NA_{ij}\vv_j$ for all $i\in [N]$.
Here and below $[N]\equiv \{1,\dots,N\}$ is the set of first $N$ integers.
In other words we identify $A$ with the
Kronecker product $A\otimes \id_{q\times q}$.
\begin{definition}\label{def:GeneralDef}
A symmetric \emph{AMP instance} is a triple $(A,\cF,x^0)$ where:
\begin{enumerate}
\item  $A = G + G^\sT$, where $G \in\reals^{N\times N}$ has i.i.d. entries $G_{ij}\sim \normal(0,(2N)^{-1})$.
\item $\cF = \{f^k:  k\in [N]\}$ is a collection of mappings
$f^k:\reals^q\times\naturals\to\reals^q$, $(\vx,t)\mapsto f^k(\vx,t)$  that are locally Lipschitz in
their first argument (and hence almost everywhere differentiable);
\item  $x^0\in \Space$ is an initial condition.
\end{enumerate}
Given  $\cF = \{f^k:  k\in [N]\}$, we define  $f(\,\cdot\,;t):\Space\to \Space$ by letting $v'=f(v;t)$ be
given by $\vv'_i = f^i(\vv_i;t)$ for all $i\in [N]$.
\end{definition}

\begin{definition}
The \emph{approximate message
  passing orbit} corresponding to the instance $(A,\cF,x^0)$ is the sequence of vectors $\{x^t\}_{t\ge 0}$, $x^t\in\Space$
defined as follows, for $t\ge 0$,
\begin{eqnarray}
x^{t+1} = A\, f(x^t;t) - \Ons_t \, f(x^{t-1};t-1)\, . \label{eq:AMPGeneralDef}
\end{eqnarray}
Here $\Ons_t: \Space\to\Space$ is the linear operator defined by
letting, for $v' = \Ons_t v$,
\begin{eqnarray}
\vv'_i = \frac{1}{N}\left(\sum_{j\in [N]}\frac{\partial f^j}{\partial \vx}(\vx^t_j,t)\right) \vv_i\, ,\label{eq:OnsT_Def}
\end{eqnarray}
with $\frac{\partial f^j}{\partial \vx}$ denoting the Jacobian matrix
of $f^j(\,\cdot\,;t):\reals^q\to\reals^q$.
\end{definition}
%
%
%
\subsection{State evolution}
\label{sec:StateEvolutionResults}

In order to establish the behavior of the
sequence $\{x^t\}_{t\ge 0}$ in the high
dimensional limit, we need to
consider a sequence of AMP instances $\{A(N), \cF_N, x^{0,N}\}_{N \ge
  0}$ indexed by the dimension $N$.

\begin{definition}\label{def:Converging}
We say that the sequence of AMP instances $\{(A(N),\cF_N,x^{0,N})\}_{N\ge 0}$
is \emph{converging} if there exists:
$(i)$ An  integer $q$;
$(ii)$ A function $g:\reals^q\times\reals^{q}\times[q]\times
\naturals\to\reals^q$ with $g(\vx,\vy,a,t) = (g_1(\vx,\vy,a,t),\cdots,
g_q(\vx,\vy,a,t))$, 
such that, for each $r\in [q]$, $a \in [q]$, $t \in \naturals$, 
$g_r(\cdots,a,t)$ is Lipschitz continuous;
$(iii)$ $q$ probability measures $P_1$, \dots, $P_q$ on $\reals^{q}$;
$(iv)$  For each $N$, a finite partition $C^N_1\cup C^N_2\cup \dots \cup
C^N_q=[N]$;
$(v)$ $q$ positive definite matrices $\hSigma^1_1,\dotsc, \hSigma^1_q \in \reals^{q \times q}$, such that the following happens;
\begin{enumerate}
\item For each $a\in [q]$, we have $\lim_{N\to\infty} |C^N_a|/N =
  c_a\in (0,1)$.
\item For each $N\ge 0$, each $a\in [q]$ and each $i\in C_a^N$, we
  have $f^i(\vx,t) = g(\vx,\vy_i,a,t)$. Further,  the empirical distribution of $\{\vy_i\}_{i\in C^N_a}$, denoted by $\hat{P}_a$, 
  converges weakly to $P_a$. 
\item For each  $a\in [q]$, in probability,
\begin{eqnarray}
\lim_{N\to\infty} \frac{1}{|C_a^N|}\sum_{i\in C_a^N}g\Big(\vx^0_i,\vy_i,a,0\Big) g\Big(\vx^0_i,\vy_i,a,0\Big)^{\sT}
= \hSigma^0_a \, .\label{eq:InitialSE}
\end{eqnarray}
\end{enumerate}
\end{definition}
\begin{remark}
An apparent generalization of the above definition would require the
partition to be $C^N_1\cup C^N_2\cup \dots \cup
C^N_{q'}=[N]$, while $x^t\in\Space$, with $q\neq q'$. It is easy to
see that there is no loss of generality in assuming $q=q'$ as we do in
our definition. Indeed the case $q'<q$ can be reduced to our
setting by refining the partition arbitrarily, and $q'>q$ by adding
dummy coordinates to to the variables $\vx_i$. 
\end{remark}
\begin{remark}
The function $f^i(\,\cdot\,,\,\cdot\,)$ depends implicitly on $\vy_i$.
However, the $\vy_i$'s do not change across iterations and so we do not show
this dependence explicitly in our notation.
\end{remark}

Our next result establishes that the low-dimensional marginals of $\{x^t\}$ are
asymptotically Gaussian.
\emph{State evolution}  characterizes the covariance of these
marginals. For each $t\ge 1$, state evolution defines a 
positive semidefinite matrix $\Sigma^t \in\reals^{q\times q}$. This is obtained by letting, for each $t\ge 1$
\begin{eqnarray}
\Sigma^{t} &=& \sum_{b=1}^q c_b\,\, \hSigma^{t-1}_b\, ,\label{eq:GeneralSE1}\\
\hSigma^t_a &=& \E\left\{  g(Z^t_a,Y_a,a,t) g(Z^t_a,Y_a,a,t)^{\sT}\right\}\,\label{eq:GeneralSE2} ,
\end{eqnarray}
for all $a\in [q]$.
Here $Y_a\sim P_a$, $Z_a^t\sim \normal\left(0, \Sigma^t \right)$ and
$Y_a$ and $Z_a^t$ are independent.

For $k\ge1$ we say a function $\phi: \reals^m \to\reals$ is \emph{pseudo-Lipschitz}
of order $k$ and denote it by $\phi\in \psL(k)$ if there exists a constant
$L>0$ such that, for all $x,y\in\reals^m$:
\begin{eqnarray}
|\phi(x)-\phi(y)|\le L(1+\|x\|^{k-1}+\|y\|^{k-1})\, \|x-y\|\, .
\end{eqnarray}
Notice that if $\phi\in \psL(k)$, then there exists a constant $L'$ such that for all $x\in\reals^m$: $|\phi(x)|\leq L'(1+\|x\|^k)$.

\begin{theorem}\label{thm:SE}
Let $(A(N),\cF_N,x^0)_{N\ge 0}$ be a converging sequence of AMP
instances, and denote by $\{x^t\}_{t\ge 0}$ the corresponding AMP
sequence. Suppose further that  $\E_{P_a}(\|Y_a\|^{2k-2})$ is bounded,
and $\E_{\hat{P}_a}(\|Y_a\|^{2k-2}) \to \E_{P_a}(\|Y_a\|^{2k-2})$ as
$N \to \infty$,
for some $k\ge 2$. Then for all $t\ge 1$, each $a\in [q]$,  and any pseudo-Lipschitz function
$\psi:\reals^q\times\reals^q\to\reals$ of order $k$, we
have, almost surely,
\begin{eqnarray}
\lim_{N\to\infty} \frac{1}{|C_a^N|}\sum_{j\in C^N_a} \psi(\vx^t_j,\vy_j) =
\E\{\psi(Z_a^t,Y_a)\}\, ,
\end{eqnarray}
where $Z_a^t\sim \normal(0,\Sigma^t)$ is independent of $Y_a\sim P_a$.
\end{theorem}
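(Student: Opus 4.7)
I would follow the conditioning technique introduced by Bolthausen \cite{bolthausen2012iterative} and developed for AMP in \cite{BM-MPCS-2011}, adapting it to the symmetric, block-valued setting of Definitions~\ref{def:GeneralDef}--\ref{def:Converging}. The symmetry of $A$ is a real simplification over the rectangular case of \cite{BM-MPCS-2011}: only one recursion needs to be conditioned at each step and what were two Gram matrices collapse into one. The proof is by induction on $t$, establishing simultaneously (i) a conditional distributional representation of $A$ given the history of the iterates, and (ii) the convergence of empirical pseudo-Lipschitz averages claimed in the theorem.

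\textbf{Conditioning.} Let $\mathcal{S}_t$ denote the $\sigma$-algebra generated by $x^0$, $\{\vy_i\}_{i\in[N]}$, the iterates $x^1,\ldots,x^t$, and $f(x^s;s)$ for $s<t$. A direct Gaussian calculation (see \cite[Lemma~10]{BM-MPCS-2011}), using that $A=G+G^\sT$ with $G$ i.i.d.\ Gaussian, yields a conditional representation of the form
\begin{equation*}
A\,\big|\,\mathcal{S}_t \;\deq\; E_t \;+\; P_t^{\perp}\, \tilde{A}\, P_t^{\perp},
\end{equation*}
where $\tilde{A}$ is an independent copy of $A$, $P_t^{\perp}$ is the orthogonal projector onto the complement of $\mathrm{span}\bigl(f(x^0;0),\ldots,f(x^{t-1};t-1)\bigr)$ (acting blockwise via the Kronecker convention of the paper), and $E_t$ is a deterministic, $\mathcal{S}_t$-measurable matrix computable from the past iterates. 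Substituting this into \eqref{eq:AMPGeneralDef} decomposes $x^{t+1}$ into an $\mathcal{S}_t$-measurable linear combination of the past $f(x^s;s)$ (from $E_t f(x^t;t)$ and the Onsager term) plus an independent Gaussian whose covariance is determined by the Gram matrix of those $f(x^s;s)$.

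\textbf{Inductive step.} The induction hypothesis at step $t$ asserts (a) the representation above holds and, after subtracting the Onsager correction $\Ons_t f(x^{t-1};t-1)$, the coefficient of $f(x^{t-1};t-1)$ inside $E_t f(x^t;t)$ cancels up to $o_N(1)$ while the remaining coefficients are consistent with state evolution \eqref{eq:GeneralSE1}--\eqref{eq:GeneralSE2}; and (b) for each block $a\in[q]$ and pseudo-Lipschitz $\psi$ of order $k$, $|C_a^N|^{-1}\sum_{j\in C_a^N}\psi(\vx^t_j,\vy_j)\to\E\{\psi(Z_a^t,Y_a)\}$ almost surely. The Onsager cancellation is the standard one: Taylor-expand $f$ around $x^{t-1}$ and combine with the limit $\tfrac{1}{N}\sum_j \partial_\vx f^j(\vx^t_j,t)\to\sum_a c_a \E[\partial_\vx g(Z_a^t,Y_a,a,t)]$, which is itself an instance of (b). The blockwise Gram limit $|C_a^N|^{-1}\sum_{j\in C_a^N} g(\vx^s_j,\vy_j,a,s)g(\vx^s_j,\vy_j,a,s)^\sT \to \hSigma^s_a$ (again (b)) summed over $a$ with weights $c_a$ produces exactly $\Sigma^{s+1}$, so the new Gaussian contribution to $x^{t+1}$ has the covariance prescribed by \eqref{eq:GeneralSE1}. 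Part (b) at step $t+1$ then follows by testing $\psi$ against this explicit Gaussian-plus-memory representation of $x^{t+1}$ and invoking strong laws for Gaussian functionals.

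\textbf{Main obstacles.} Two technical points deserve real care. The first is non-degeneracy: the conditional representation requires the Gram matrix of $(f(x^s;s))_{s<t}$ to be non-singular in a uniform limiting sense, which must be carried as an extra clause of the induction (or controlled by a small regularization). The second, and in my view the main obstacle, is the promotion from Lipschitz to pseudo-Lipschitz test functions of order $k$: this requires uniform $(2k-2)$-moment bounds on the iterates $\vx^t_j$, which is precisely where the hypotheses $\E_{P_a}\|Y_a\|^{2k-2}<\infty$ and $\E_{\hat{P}_a}\|Y_a\|^{2k-2}\to\E_{P_a}\|Y_a\|^{2k-2}$ enter; these bounds must be propagated through the Onsager cancellation and through the $q\times q$ block structure, with the Lipschitz assumption on $g$ used to close the induction. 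Tracking the block indices and $q\times q$ matrix coefficients everywhere adds bookkeeping but, once the scalar argument of \cite{BM-MPCS-2011} is in hand, introduces no genuinely new conceptual difficulty.
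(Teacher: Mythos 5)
Your plan is correct and follows essentially the same route as the paper: Bolthausen-style conditioning of the symmetric $A$ on the filtration of past iterates, an induction that simultaneously carries the distributional representation, the pseudo-Lipschitz averages, the $(2k-2)$-moment bounds, and a non-degeneracy clause for the Gram matrix of the $f(x^s;s)$ (which the paper enforces via its ``non-trivial'' condition plus the small perturbation $g+\eps\varphi$ you allude to). The only cosmetic difference is that the paper effects the Onsager cancellation not by a Taylor expansion but through an induction clause of Stein's-lemma type, $\lim\<x^{r+1},\varphi(x^{s+1},y)\>=\lim\<x^{r+1},x^{s+1}\>\<\nphi(x^{s+1},y)\>$, which is the rigorous counterpart of the derivative-average limit you invoke.
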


%
%
\section{AMP for rectangular and spatially-coupled matrices}
\label{sec:SpatialCoupling}

In this section we develop two applications of our main theorem:
\begin{enumerate}
\item We show that AMP iterations with  $A$ a rectangular matrix, see
  e.g. Eqs.~(\ref{eq:FirstAMP1}), (\ref{eq:FirstAMP2}), can
  be recast in the form of an iteration with a symmetric matrix $A$
  and are therefore covered by Theorem \ref{thm:SE}. This construction
  is provided in Section \ref{sec:ProofLemmaSE} (below Proposition
  \ref{pro:spatial-special}).
\item We apply the general Theorem \ref{thm:SE} to AMP
  reconstruction in compressed sensing with spatially
  coupled matrices. In \cite{DJM}, it was proved that, conditionally
  to a state evolution lemma, this approach achieves the
  information-theoretic limits of compressed sensing set forth in \cite{WuVerdu}.
  Here we show that our main result Theorem \ref{thm:SE}
  implies the state evolution lemma  (Lemma~$4.1$ in~\cite{DJM}).
\end{enumerate}

\subsection{General matrix ensemble}
\label{sec:MatrixEnsemble}

We begin by describing a more general matrix ensemble that encompasses 
spatially coupled matrices, 
and will be denoted by $\Ens(W,m_0,n_0)$. The ensemble depends on two
integers $m_0,n_0\in\naturals$, and on a matrix with non-negative
entries $W\in \reals_+^{\Rows\times \Cols}$,
whose rows and columns are indexed by the finite sets $\Rows$, $\Cols$
(respectively `rows' and `columns').
The matrix is \emph{roughly row-stochastic}, i.e. 
\begin{eqnarray}\label{eqn:almost_row_stochastic}
\frac{1}{2}\le\sum_{c\in\Cols}W_{r,c} \le 2\, ,\;\;\;\;\;\;\; \mbox{for all
}r\in\Rows\, .
\end{eqnarray}
We will let $|\Rows|\equiv L_r$ and $|\Cols| \equiv L_c$ denote the 
matrix dimensions.
The ensemble parameters are related to the
sensing matrix dimensions by $n= n_0L_c$ and $m=m_0L_r$.

In order to describe a random matrix $A\sim\Ens(W,m_0,n_0)$ from this
ensemble, partition the column and row indices of $A$ in --respectively--
$L_c$ and $L_r$ groups of equal
size. Explicitly
\begin{align*}
[n] &= \cup_{s\in\Cols}C_s\, ,\;\;\;\;  |C_s|= n_0\, ,\\
[m] &= \cup_{r\in\Rows}R_r\, ,\;\;\;\;  |R_r|=m_0\, .
\end{align*}
Further, if $i\in R_r$ or $j\in C_s$ we
will write, respectively,   $r= \gr(i)$ or $s= \gr(j)$. In other
words $\gr(\,\cdot\, )$ is the operator determining the group index of
a given row or column.

With this notation we have the following concise definition of the
ensemble.
\begin{definition}
A random sensing matrix  $A$ is distributed according to the ensemble
$\Ens(W,m_0,n_0)$ (and we write $A\sim \Ens(W,m_0,n_0)$) if the entries
$\{A_{ij}, \;\; i\in [m], j\in [n]\}$ are independent  Gaussian random
variables with 
\begin{eqnarray}\label{eqn:A_W}
A_{ij}\sim \normal\Big(0,\frac{1}{m_0}\, W_{\gr(i),\gr(j)}\Big)\, .
\end{eqnarray}
\end{definition}
See Fig.~\ref{fig:Amatrix} for a schematic of matrix $A$. Note that
the ensemble $\Ens(W,m_0,n_0)$ includes, as special case, rectangular
non-symmetric matrices with \iid entries. 
%
%
\begin{figure}[!t]
\centering
\includegraphics*[viewport = 60 60 700 520, width = 5in]{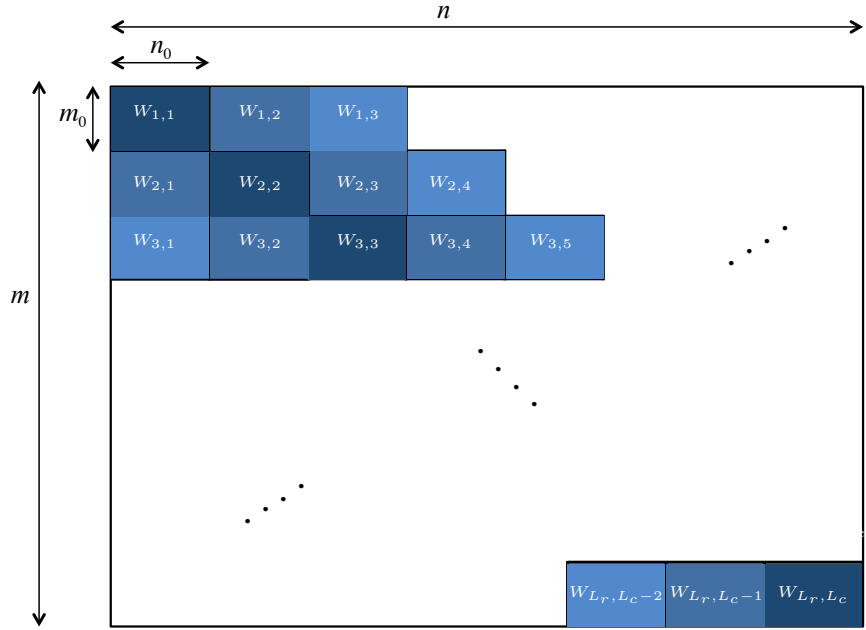}
\put(-292,217){\tiny{\white $W_{1,1}$}}
\put(-252,217){\tiny{\white $W_{1,2}$}}
\put(-215,217){\tiny{\white $W_{1,3}$}}
\put(-292,190){\tiny{\white $W_{2,1}$}}
\put(-252,190){\tiny{\white $W_{2,2}$}}
\put(-215,190){\tiny{\white $W_{2,3}$}}
\put(-180,190){\tiny{\white $W_{2,4}$}}
\put(-292,168){\tiny{\white $W_{3,1}$}}
\put(-252,168){\tiny{\white $W_{3,2}$}}
\put(-215,168){\tiny{\white $W_{3,3}$}}
\put(-180,168){\tiny{\white $W_{3,4}$}}
\put(-142,168){\tiny{\white $W_{3,5}$}}
\put(-145,60){\tiny{\white $W_{\Lr-1,\Lc-3}$}}
\put(-131,60){\tiny{\white $W_{\Lr-1,\Lc-2}$}}
\put(-91,60){\tiny{\white $W_{\Lr-1,\Lc-1}$}}
\put(-50,60){\tiny{\white $W_{\Lr-1,\Lc}$}}
\put(-127,35){\tiny{\white $W_{\Lr,\Lc-2}$}}
\put(-89,35){\tiny{\white $W_{\Lr,\Lc-1}$}}
\put(-48,35){\tiny{\white $W_{\Lr,\Lc}$}}
\caption{\small {Construction of the spatially coupled measurement matrix $A$ for compressive sensing as described in Section~\ref{sec:MatrixEnsemble}. The matrix is divided into blocks with size $m_0$ by $n_0$. (Number of blocks in each row and each column are respectively $\Lc$ and $\Lr$, hence $m= m_0 \Lr$, $n = n_0\Lc$). The matrix elements $A_{ij}$ are chosen as $\normal(0,\frac{1}{m_0}W_{\gr(i),\gr(j)})$. In this figure, $W_{i,j}$ depends only on $|i-j|$ and thus blocks on each diagonal have the same variance.}}  \label{fig:Amatrix}
\end{figure}
%
\subsection{AMP for compressed sensing reconstruction}

AMP algorithms were applied in \cite{DJM} to compressed sensing
reconstruction with spatially coupled sensing matrices \cite{KrzakalaEtAl}.
Here we follow the scheme and notations of \cite{DJM}. In particular, we assume that the unknown vector $x$ to be
reconstructed has entries whose empirical distribution converges weakly to a
probability measure $p_X$ over $\reals$.
The AMP algorithm takes the following form (initialized with $x_i^1 = \E_{p_X} (X)$ for all $i \in [n]$):
\begin{eqnarray}
x^{t+1} & = & \eta_t(x^t+(Q^t\odot A)^\sT r^t)\, ,\label{eq:AMP1}\\
r^t & = & y-Ax^t+\ons^t\odot r^{t-1}\, .\label{eq:AMP2}
\end{eqnarray}
Here, for each $t$, $\eta_t:\reals^n \to \reals^n$ is a differentiable non-linear function that
depends on the input distribution $p_X$. Further, for $v\in \reals^n$, we have
$\eta_t(v) = (\eta_{t,1}(v_1),\dotsc,\eta_{t,n}(v_n))$ for some functions $\eta_{i,t}: \reals \to \reals$.
The symbol $\odot$ indicates Hadamard (entrywise) product. The specific choices for $\eta_t, Q^t, \ons^t$ are given
in Section \ref{sec:GeneralAlgo} below.
  

\subsection{State evolution}
Given $W\in \reals_+^{\Rows \times \Cols}$ roughly row-stochastic, and 
undersampling rate $\delta \in (0,1)$, the corresponding state evolution is
defined as follows. 
Start with initial condition
\begin{eqnarray} \label{eqn:initial_cond}
\psi_i(0) = \infty  \mbox{ for all } i\in \Cols\, .
\end{eqnarray}
For all $t\ge 0$, $a\in \Rows$, and $i\in \Cols$, let
\begin{eqnarray}
\begin{split}\label{eq:ExplicitSE}
\phi_a(t) &=& \sigma^2+\frac{1}{\delta}\sum_{i\in\Cols } W_{a,i}\, \psi_i(t)\, ,\\
\psi_{i}(t+1) &= &\mmse\Big(\sum_{b\in\Rows}W_{b,i}\phi_b(t)^{-1}\Big)\,. 
\end{split}
\end{eqnarray}
Here and below, $\mmse(s)$ denotes the minimum mean square error 
in estimating $X\sim p_X$ from a noisy observation in Gaussian noise, at 
signal-to-noise ratio $s$. Formally,
\[
\mmse(s) = \E\{[X- \E[X|Y]]^2\}, \quad \quad Y = \sqrt{s} X+Z\,.
\]
%
%
%
\subsection{Construction of $\eta_t, \ons^t, Q^t$}
\label{sec:GeneralAlgo}
In the constructions for the matrix $Q^t$, the nonlinearities $\eta_t$, and the vector $\ons^t$,
we use the fact that the state evolution sequence can be precomputed.

Define $Q^t$ by
\begin{eqnarray}\label{eq:Q_def}
Q_{ij}^t\equiv\frac{\phi_{\gr(i)}(t)^{-1}}{\sum_{k=1}^{\Lr}W_{k,\gr(j)}\phi_{k}(t)^{-1}}\,.
\end{eqnarray}

The nonlinearity $\eta_t$ is chosen as follows:
\begin{eqnarray}\label{eq:eta_def1}
\eta_t(v) =
\big(\eta_{t,1}(v_1),\eta_{t,2}(v_2),\;\dots\;,\eta_{t,N}(v_N)\big)\, ,
\end{eqnarray}
where $\eta_{t,i}$ is the conditional expectation estimator
for $X\sim p_X$ in gaussian noise:
\begin{eqnarray}\label{eq:eta_def2}
\eta_{t,i}(v_i) = \E\big\{X\,\big|\, X+\, s_{\gr(i)}(t)^{-1/2}Z = v_i\,\big\}\,
,\;\;\;\; s_r(t) \equiv \sum_{u \in \Rows} W_{u,r}\phi_u(t)^{-1}\, .
\end{eqnarray}
Notice that the function  $\eta_{t,i}(\,\cdot\,)$ depends on $i$ only
through the group index $\gr(i)$, and in fact parametrically
through $s_{\gr(i)}(t)$. We  define $\teta_{t,i} = \eta_{t,u}$ for $i\in C_u$.

Finally, in order to define the vector $\ons^t_i$, let us introduce
the quantity (with $\eta'_{t,i}$ denoting the derivative of
$v_i\mapsto \eta_{t,i}(v_i)$)
\begin{eqnarray}
\<\eta'_{t}\>_u = \frac{1}{n_0}\sum_{i\in
  C_u}\eta'_{t,i}\big(x^t_i+((Q^t\odot A)^\sT r^t)_i\big)\, .
\end{eqnarray}
The vector $\ons^t$ is then defined by
\begin{eqnarray}
\label{eqn:ons_def}
\ons^t_i \equiv
\frac{1}{\delta}\sum_{u \in \Cols} W_{\gr(i),u}\tQ^{t-1}_{\gr(i),u}\,
\<\eta'_{t-1}\>_u\, ,\label{eq:OnsagerDef}
\end{eqnarray}
where we  defined $Q^t_{i,j} = \tQ^t_{r,u}$ for $i\in R_r$, $j\in
C_u$. 

The following Lemma (Lemma~$4.1$ in~\cite{DJM}) claims that the state evolution~\eqref{eq:ExplicitSE} allows an
exact asymptotic analysis of AMP algorithm~\eqref{eq:AMP1}-~\eqref{eq:AMP2} in the limit of a large number of
dimensions.

\begin{lemma}\label{lemma:SE}
Let $W\in\reals_+^{\Rows\times\Cols}$ be a roughly row-stochastic matrix and $\phi(t)$, $Q^t$, $\ons^t$ be defined as in Section
\ref{sec:GeneralAlgo}.
Let $m_0=m_0(n_0)$ be such that $m_0/n_0\to\delta$, as $n_0 \to \infty$, and let $A(n)\sim \Ens(W,m_0,n_0)$. 
Further suppose that the empirical distribution of the entries of $x(n)$ converges weakly to a probability measure $p_X$ on
$\reals$ with bounded second moment and the empirical second moment of $x(n)$ also converges to $\E_{p_X}(X^2)$. 
Similarly, suppose that the empirical distribution of the entries of $w(n)$ converges weakly to a probability measure $p_W$ on
$\reals$ with bounded second moment and the empirical second moment of $w(n)$ also converges to $\E_{p_W}(W^2)\equiv \sigma^2$. 
Then, for all $t\ge 1$, almost surely we have 
\begin{eqnarray}
\underset{n_0\to\infty}{\limsup}\frac{1}{n_0}\|x^t_{C_a}(A(n);y(n))-x_{C_a}\|_2^2=
\mmse\Big(\sum_{i\in \Rows}W_{i,a}\phi_i(t-1)^{-1} \Big)\, ,
\end{eqnarray}
for all $a \in \Cols$, where $x^t_{C_a}, x_{C_a} \in \reals^{n_0}$ respectively denote the restrictions of $x^t,x$ to indices in $C_a$.
\end{lemma}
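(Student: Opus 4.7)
The plan is to reduce the rectangular, heteroscedastic iteration \eqref{eq:AMP1}--\eqref{eq:AMP2} to a single symmetric iteration of the form \eqref{eq:AMPGeneralDef}, to which Theorem~\ref{thm:SE} applies. Following the reduction announced in Section~\ref{sec:SpatialCoupling}, I would take $N=m+n$ and $q=L_r+L_c$, and partition $[N]$ into the $L_r$ row-blocks $R_r$ of size $m_0$ and $L_c$ column-blocks $C_u$ of size $n_0$. Let $A$ be the symmetric iid-Gaussian matrix of Definition~\ref{def:GeneralDef}, and design the node-wise nonlinearity $f^j:\reals^q\to\reals^q$ to do two things at once: for $j\in C_u$, it applies the Bayes-optimal denoiser $\eta_{t,j}$ of \eqref{eq:AMP1} to the ``column'' coordinate of $\vx_j$ and places the result into the row-group coordinates with scaling $\sqrt{N\,W_{r,u}/m_0}$; for $j\in R_r$, it forms the residual quantity from \eqref{eq:AMP2} and places it into the column-group coordinates with symmetric scaling. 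A direct calculation shows that the resulting symmetric recursion $x^{t+1}=A\,f(x^t;t)-\Om_t f(x^{t-1};t-1)$ reproduces exactly \eqref{eq:AMP1}--\eqref{eq:AMP2}, with the general Onsager matrix $\Om_t$ of \eqref{eq:OnsT_Def} encoding both the $Q^t$ weights of \eqref{eq:Q_def} and the scalar Onsager coefficients $\ons^t$ of \eqref{eqn:ons_def}.

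Next I would verify that the matrix-valued state evolution \eqref{eq:GeneralSE1}--\eqref{eq:GeneralSE2} for the constructed iteration collapses to the scalar recursion \eqref{eq:ExplicitSE}. By construction each $f^j(\,\cdot\,;t)$ has support only in the coordinates ``opposite'' to the side of the partition where $j$ lives, so $\Sigma^t$ has disjoint support on the row and column coordinates. Carrying the $W_{r,u}/m_0$ factors through the quadratic form $\E\{g(Z^t_a,Y_a,a,t)g(Z^t_a,Y_a,a,t)^{\sT}\}$, and using the block proportions together with $m_0/n_0\to\delta$, the diagonal entry at a row group $r$ becomes $\phi_r(t)=\sigma^2+\delta^{-1}\sum_u W_{r,u}\psi_u(t)$, while the entry at a column group $u$ becomes $\psi_u(t+1)=\mmse(\sum_r W_{r,u}\phi_r(t)^{-1})$. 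The latter identity uses the definition \eqref{eq:eta_def2} of $\eta_{t,u}$ as the conditional expectation of $X$ in Gaussian noise, which gives $\E[(\eta_{t,u}(X+s_u(t)^{-1/2}Z)-X)^2]=\mmse(s_u(t))$. The convention $\psi_u(0)=\infty$ in \eqref{eqn:initial_cond} corresponds to the trivial initialization $x^1=\E_{p_X}(X)\mathbf{1}$.

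To conclude, I would apply Theorem~\ref{thm:SE} with the pseudo-Lipschitz test function $\psi(\vx,\vy)=(\vx_u-\vy_u)^2$ of order $k=2$, where $\vx_u$ and $\vy_u$ denote the column-group-$u$ coordinate and the true signal entries $x_j$ are stored in $\vy_j$ as in Definition~\ref{def:Converging} (with $P_u=p_X$). Averaging over $j\in C_a$ then gives, almost surely,
\begin{equation*}
\lim_{n_0\to\infty}\frac{1}{n_0}\|x^t_{C_a}-x_{C_a}\|_2^2
= \E\bigl[(\eta_{t-1,a}(X+s_a(t-1)^{-1/2}Z)-X)^2\bigr]
= \mmse\Bigl(\sum_{i\in\Rows}W_{i,a}\phi_i(t-1)^{-1}\Bigr),
\end{equation*}
which is the claim. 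The hypotheses of Theorem~\ref{thm:SE} at order $k=2$ are met: the bounded-second-moment and second-moment-convergence assumptions on $x(n)$ and $w(n)$ give weak convergence of $\hat{P}_a$ to $P_a$ together with the required moment convergence; $\eta_{t-1,a}$ is Lipschitz as the Bayes denoiser of a finite-second-moment random variable in Gaussian noise; and $W$ is bounded by \eqref{eqn:almost_row_stochastic}.

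The principal difficulty is the middle step: verifying that the matrix-valued recursion \eqref{eq:GeneralSE1}--\eqref{eq:GeneralSE2} truly collapses to the scalar recursion \eqref{eq:ExplicitSE}. The $W_{r,u}/m_0$ factors appear simultaneously as variance scalings absorbed into $f^j$ and as indicators of the block partition, and some care is needed to see that the off-diagonal blocks of $\Sigma^t$ vanish and that the diagonal blocks match \eqref{eq:ExplicitSE} exactly. A secondary check is that the Onsager operator $\Om_t$ of \eqref{eq:OnsT_Def} for our piecewise-constructed $f^j$ indeed reproduces the $(Q^t,\ons^t)$ prescription of \eqref{eq:Q_def}--\eqref{eqn:ons_def}; this is a chain-rule computation that must be performed group-by-group.
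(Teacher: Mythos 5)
Your high-level plan --- embed the rectangular heteroscedastic iteration into a single symmetric AMP, apply Theorem~\ref{thm:SE}, then show that the matrix-valued state evolution collapses to the scalar recursion \eqref{eq:ExplicitSE} --- is the same as the paper's. However, there are two structural ingredients missing that make your reduction, as stated, fail.

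\textbf{Missing change of variables.} The recursion \eqref{eq:AMP1}--\eqref{eq:AMP2} is \emph{affine} in the state ($y$ and $A x^t$ appear as constant and linear terms) and contains the Hadamard-coupling $(Q^t\odot A)^\sT r^t$, neither of which matches the bilinear template $A\,f(x^t;t)$ of Eq.~\eqref{eq:AMPGeneralDef}. You propose to ``design $f^j$ to do two things at once'' but never explain how the affine terms are absorbed. The paper first passes to the shifted variables $\tx^{t+1} = x - (Q^t\odot A)^\sT r^t - x^t$, $\tr^t = w - r^t$ (so $\tx^t$ tracks the estimation error and $\tr^t$ tracks the noise-centered residual). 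In those variables the iteration becomes exactly of the form $A\cdot(\text{nonlinearity of state})$ plus an Onsager correction; moreover the $Q^t$ weights are then absorbed into the coordinate-wise functions $e,h$ rather than sitting on the matrix. Without this step, there is no choice of $f^j$ that reproduces \eqref{eq:AMP1}--\eqref{eq:AMP2} as a symmetric AMP orbit.

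\textbf{Missing time-alternation.} You propose that, within a single symmetric update, the column nodes produce the denoised estimate and the row nodes simultaneously produce the residual. But the rectangular AMP is \emph{sequential}: $u^t$ is computed from $v^t$, and then $v^{t+1}$ is computed from the \emph{same-iteration} $u^t$. A simultaneous update computes both from the previous state, so $v^{t+1}$ ends up a function of $u^{t-1}$ rather than $u^t$ --- a Jacobi-type rather than Gauss--Seidel-type sweep, which is a genuinely different recursion with a different state-evolution fixed point. The paper handles this by running the symmetric chain at \emph{double speed}: the function $g(\cdot,\cdot,a,\cdot)$ is identically zero on row groups at even times and identically zero on column groups at odd times (Eqs.~\eqref{eq:g1}--\eqref{eq:g4}), so that $x^{2t}$ carries $v^{t+1}$ in its column coordinates and $x^{2t+1}$ carries $u^t$ in its row coordinates. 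This alternation is what makes the embedding exact; it is verified by induction in Proposition~\ref{pro:spatial-special}, which is the core of the reduction and has no counterpart in your proposal.

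\textbf{Deferred key computation.} You identify the collapse of the $q\times q$ covariance recursion \eqref{eq:GeneralSE1}--\eqref{eq:GeneralSE2} to the scalar $(\phi,\psi)$ recursion \eqref{eq:ExplicitSE} as ``the principal difficulty'' and leave it as a check. In the paper this is the substantive algebraic work: one must prove by induction on $t$ that $(\Sigma^{2t}_{aa})^{-1} = \sum_{i\in\Rows} W_{i,a}\phi_i(t)^{-1}$, tracking the $\sqrt{\delta/(\delta+1)}$ normalization of $A_s$, the compensating $\sqrt{(\delta+1)/\delta}$ factors in $g$, and the specific rational form of $\tQ^t_{b,a}$ from \eqref{eq:Q_def}, without which the telescoping $\sum_b W_{b,a}(\tQ^t_{b,a})^2\phi_b(t) = \bigl(\sum_b W_{b,a}\phi_b(t)^{-1}\bigr)^{-1}$ does not go through. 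Asserting that it ``collapses'' is not a proof.

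In summary: the approach is the right one, but the two mechanisms that actually make the embedding work --- the error/residual change of variables and the even/odd time-interleaving --- are absent, and the one computation you correctly flag as hard is not attempted.
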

%
\subsection{Proof of Lemma~\ref{lemma:SE}}
\label{sec:ProofLemmaSE}

We show that Lemma~\ref{lemma:SE} follows from Theorem~\ref{thm:SE}. Consider the following change of variables:
\begin{eqnarray}
\tx^{t+1} &=& x - (Q^t \odot A)^\sT r^t - x^t,\\
\tr^{t} &=& w - r^t.  
\end{eqnarray}
Rewriting Eqs~\eqref{eq:AMP1} and~\eqref{eq:AMP2} in terms of $\tx$ and $\tr$, we obtain
\begin{eqnarray}
\tx^{t+1} &=& (Q^t \odot A)^\sT (\tr^t-w) - \{\eta_{t-1}(x - \tx^t) - x\},\label{eqn:tx}\\
\tr^{t} &=& A \{\eta_{t-1}(x - \tx^t) - x\} + \ons^t \odot (\tr^{t-1} - w).\label{eqn:tr}
\end{eqnarray}
Let $q = \Lr+\Lc$ and define functions $e(\cdot,\cdot,\cdot;t), h(\cdot,\cdot,\cdot;t): \reals^q \times \reals^q \times [q] \to \reals^q$ as follows:
\begin{eqnarray*}
h(\vu,\vw,a;t) &=& \sqrt{\Lr}\, (\vu(a) - \vw(a))\, [\sqrt{W_{a,1}} \tQ^t_{a,1} , \dotsc, \sqrt{W_{a,\Lc}} \tQ^t_{a,\Lc}, \ast,\dotsc,\ast] \quad\;\;\;\text{ for }a\in[\Lr]\, ,\\
e(\vv,\vy, a;t) &=& \sqrt{\Lr}\, \{\teta_{t-1,a}(\vy(a) - \vv(a)) - \vy(a)\}\, [\sqrt{W_{1,a}} , \dotsc, \sqrt{W_{\Lr,a}}, \ast,\dotsc,\ast]\;\; \text{ for }a\in[\Lc]\,. 
\end{eqnarray*}
In our definition, we do not care about the values of entries represented by $\ast$, since they are irrelevant for our purposes. Values of $h(\vu,\vw,a;t)$ for $a \in\{\Lr+1,\dotsc,\Lr+\Lc\}$ and $e(\vv,\vy, a;t)$ for $a \in\{\Lc+1,\dotsc,\Lr+\Lc\}$ are also irrelevant for our purposes and can be defined arbitrarily. Note that $h,e \in \psL(2)$. We also define function $\he(\cdot,\cdot;t): \nSpace \times \nSpace \to \nSpace$ by letting $v' = \he(v,y;t)$ be given by $\vv'_j = e(\vv_j,\vy_j,\gr(j);t)$ for all $j \in [n]$. Similarly, $\hh(\cdot,\cdot;t): \mSpace \times \mSpace \to \mSpace$ is defined by letting $u' = \hh(u,w;t)$ be given by $\vu'_i = h(\vu_i,\vw_i,\gr(i);t)$ for all $i \in [m]$.
\smallskip
 
Let $\tA\in \reals^{m \times n}$ be a normalized version of $A$ obtained as in the following: 
\begin{eqnarray*}
\tA_{ij} = \sqrt{\frac{1}{\Lr\, W_{\gr(i),\gr(j)}}}\, A_{ij}.
\end{eqnarray*}
Therefore, $\tA$ has i.i.d. entries $\normal(0,1/m)$.
\begin{proposition}\label{pro:spatial-special}
Consider the following approximate message passing orbit with vectors $\{v^t,u^t\}_{t \ge 0}$, $v^t \in \nSpace$, $u^t \in \mSpace$:
\begin{eqnarray}
u^{t} &= & \tA\, \he(v^t,y;t) - \Ons_t \, \hh(u^{t-1},w;t-1)\, ,\label{eq:BipartiteAMP1}\\
v^{t+1} & = &\tA^{\sT}\, \hh(u^t,w;t) - \POns_t\,  \he(v^t,y;t)\, ,\label{eq:BipartiteAMP2}
\end{eqnarray}
for given $y\in \nSpace$ and $w \in \mSpace$. Here $\Ons_t: \mSpace\to\mSpace$ is the linear operator defined by
letting, for $z' = \Ons_t z$, and any $i\in [m]$,
\begin{eqnarray}
\vz'_i = \frac{1}{m}\left(\sum_{k\in [n]}\frac{\partial e}{\partial \vv}(\vv^t_k,\vy_k,\gr(k);t)\right) \vz_i\, .
\end{eqnarray}
Analogously $\POns_t: \nSpace\to\nSpace$ is the linear operator defined by
letting, for $z' = \POns_t z$, and any $j\in [n]$,
\begin{eqnarray}
\vz'_j = \frac{1}{m}\left(\sum_{l\in [m]}\frac{\partial h}{\partial \vu}(\vu^t_l,\vw_l,\gr(l);t)\right) \vz_j\, .
\end{eqnarray}
Assume that $y= (\vy_1,\dotsc, \vy_n)$, $ w = (\vw_1, \dotsc, \vw_m)$, and $v^1 = (\vv^1_1, \dotsc, \vv^1_n)$ are given by
\begin{align*}
\vy_k &= (\ast, \cdots, \ast, \underbrace{x_{k}}_{\rm{position }\,\, \gr(k)},\ast,\cdots,\ast) \in \reals^q,\quad \quad \forall k \in [n]\\
\vw_k &= (\ast, \cdots, \ast, \underbrace{w_k}_{\rm{position }\,\, \gr(k)}, \ast, \cdots, \ast) \in \reals^q,\quad \quad \forall k \in [m]\\
\vv^1_k &=  (\ast, \cdots, \ast, \underbrace{\tx^1_{k}}_{\rm{position }\,\, \gr(k)},\ast,\cdots,\ast)\in \reals^q,\quad \quad \forall k \in [n].
\end{align*}
Then, we have  $\vu^t_i(\gr(i)) = \tr^t_i$ and $\vv^{t+1}_j(\gr(j)) = \tx^{t+1}_j$, for all $i\in [m], j\in [n]$, and $t\ge 0$.
\end{proposition}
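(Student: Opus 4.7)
The proof will proceed by induction on $t$, showing simultaneously the two identities $\vu^t_i(\gr(i)) = \tr^t_i$ and $\vv^{t+1}_j(\gr(j)) = \tx^{t+1}_j$. The base case is essentially given: $\vv^1_j(\gr(j)) = \tx^1_j$ by the assumed initialization. For the inductive step I would first show that, at each iteration, the only ``active'' coordinate of $\vv^t_k$ is position $\gr(k)$ and the only ``active'' coordinate of $\vu^t_i$ is position $\gr(i)$; this follows because the functions $h$ and $e$ place their values only in the first $\Lc$ or $\Lr$ slots indexed by the appropriate group, and the Jacobians inherit this support structure.

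For the $\vu^t$ update, I would extract the $\gr(i)$-th component of $\tA\, \he(v^t,y;t)$. Using that $e(\vv^t_k,\vy_k,\gr(k);t)$ has $\gr(i)$-coordinate equal to $\sqrt{\Lr W_{\gr(i),\gr(k)}}\,\{\teta_{t-1,\gr(k)}(\vy_k(\gr(k))-\vv^t_k(\gr(k))) - \vy_k(\gr(k))\}$, the inductive hypothesis collapses the argument to $x_k-\tx^t_k$ and $\vy_k(\gr(k))=x_k$. The normalization $\tA_{ik} = A_{ik}/\sqrt{\Lr W_{\gr(i),\gr(k)}}$ then exactly cancels the $\sqrt{\Lr W}$ prefactor in $e$, so the linear term reduces to $(A\{\eta_{t-1}(x-\tx^t)-x\})_i$. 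For the Onsager piece $\Ons_t\hh(u^{t-1},w;t-1)$, I would compute the $(\gr(i),s)$-entry of the Jacobian sum: $\partial e_{r}/\partial \vv_s$ is nonzero only when $s=\gr(k)$, giving $-\sqrt{\Lr W_{r,\gr(k)}}\,\eta'_{t-1,k}(x_k-\tx^t_k)$ by the inductive hypothesis. Summing over $k$ with $\gr(k)=s$ yields $|C_s|\<\eta'_{t-1}\>_s = n_0\<\eta'_{t-1}\>_s$. Multiplying by the $s$-th coordinate of $h(\vu^{t-1}_i,\vw_i,\gr(i);t-1)=\sqrt{\Lr}(\tr^{t-1}_i-w_i)\sqrt{W_{\gr(i),s}}\tQ^{t-1}_{\gr(i),s}$ and using $\Lr/m = 1/m_0$ and $n_0/m_0=1/\delta$ gives precisely $-\ons^t_i(\tr^{t-1}_i-w_i)$, matching the Onsager term in \eqref{eqn:tr} with the correct sign.

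The update for $\vv^{t+1}_j(\gr(j))$ is handled analogously. The $\gr(j)$-coordinate of $\tA^\sT \hh(u^t,w;t)$ equals $\sum_{i} (A_{ij}/\sqrt{\Lr W_{\gr(i),\gr(j)}})\cdot \sqrt{\Lr W_{\gr(i),\gr(j)}}\,\tQ^t_{\gr(i),\gr(j)}(\tr^t_i-w_i) = \sum_i A_{ij}\tQ^t_{\gr(i),\gr(j)}(\tr^t_i-w_i) = ((Q^t\odot A)^\sT (\tr^t-w))_j$ after applying the inductive hypothesis, which reproduces the first term in \eqref{eqn:tx}. For the $\POns_t$ term, I would compute $\partial h_r/\partial \vu_s$ at $(\vu^t_l,\vw_l,\gr(l);t)$: this is nonzero only when $s=\gr(l)$, and equal to $\sqrt{\Lr W_{\gr(l),r}}\tQ^t_{\gr(l),r}$. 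Summing over $l\in R_r$ produces $m_0$ copies, independent of the $\vu$-value itself (since $h$ is linear in $\vu-\vw$). Multiplying by the $s$-th coordinate of $e(\vv^t_j,\vy_j,\gr(j);t) = \sqrt{\Lr W_{s,\gr(j)}}\{\eta_{t-1,j}(x_j-\tx^t_j)-x_j\}$ and using $m_0\Lr/m = 1$ yields the second bracketed term $-\{\eta_{t-1}(x-\tx^t)-x\}_j$ in \eqref{eqn:tx}, with the correct sign coming from the minus in front of $\POns_t$.

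The main obstacle is purely bookkeeping: keeping track of (i) which index runs over $[m]$ versus $[n]$ versus $[\Rows]$ versus $[\Cols]$, (ii) the placement of the normalization factors $\sqrt{\Lr}$, $\sqrt{W}$, $\tQ^t$ inside $h$ and $e$ so that their products give the desired coefficients in $\tr^t$ and $\tx^{t+1}$, and (iii) matching the sums $\sum_{k:\gr(k)=s}\eta'_{t-1,k}(\cdot)$ to $n_0\<\eta'_{t-1}\>_s$ in the Onsager terms, which requires the inductive hypothesis to have been applied to the arguments of $\eta'_{t-1}$ inside the relevant Jacobians. Once these identifications are made, the identities propagate automatically to step $t+1$, closing the induction.
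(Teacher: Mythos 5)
Your proposal is correct and follows essentially the same route as the paper's own proof: induction on $t$, extracting the $\gr(i)$- or $\gr(j)$-th coordinate of the bipartite update, substituting the explicit forms of $e$ and $h$ so that the $\sqrt{\Lr W}$ prefactors cancel the normalization in $\tA$, and identifying the Onsager pieces with $\ons^t_i$ and the term $\eta_{t-1,j}(x_j-\tx^t_j)-x_j$ via the definitions of $\<\eta'_{t-1}\>_u$ and $Q^t$. The only blemishes are notational slips — in the $\POns_t$ step you sum over $l\in R_s$ (not $R_r$), and in the $\Ons_t$ step the ``$n_0\<\eta'_{t-1}\>_s$'' you quote still carries the suppressed factor $-\sqrt{\Lr W_{r,s}}$; you also leave implicit, as does the paper, the identity $\sum_{r\in\Rows}W_{r,u}\tQ^t_{r,u}=1$ that drives the final cancellation in the $\POns_t$ term.
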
  

We refer to Section~\ref{proof:spatial-special} for the proof of Proposition~\ref{pro:spatial-special}.

We proceed by constructing a suitable converging sequence of symmetric AMP instances, recognizing that a subset of the resulting orbit corresponds to the orbit $\{v^t,u^t\}$ of interest. The converging symmetric AMP instances $(A_s(N),g, x_s^0)$ are defined as: 

\begin{itemize}
\item The instances has dimensions $N = m+n$ and $q = \Lr + \Lc$.
\item Let $B_1 = C_1 + C_1^\sT$ and $B_2 = C_2 + C_2^\sT$, where $C_1\in \reals^{m\times m}$ and $C_2 \in \reals^{n \times n}$ have i.i.d. entries distributed as $\normal(0,(2m)^{-1})$. The symmetric matrix $A_s$ is given by
\begin{eqnarray*}
A_s = \sqrt{\frac{\delta}{\delta+1}}\begin{pmatrix}
B_1 & \tA\\
\tA^\sT &B_2
\end{pmatrix}.
\end{eqnarray*}
\item  Let $\vy_{s,i} = \vw_i\in \reals^q$ for $i \le m$ and $\vy_{s,i} = \vy_{i-m} \in \reals^q$ for $i > m$.
\item The initial condition is given by $x_s^0 = (\vx^0_{s,1},\cdots, \vx^0_{s,N}) \in \Space$, where $\vx^0_{s,i} = 0$ for $i \le m$ and 
$\vx^0_{s,i} = \vv^1_{i-m}$ for $m < i \le m+n$.
\item Finally, for any $\vx, \vy \in \reals^q$, $t \ge 0$, we let
\begin{eqnarray}
g(\vx,\vy,a,2t) &=& 0 \quad \quad\quad \quad\quad \quad\quad \quad\quad \quad  \quad \text{for } a\in \{1,\cdots,\Lr\},\label{eq:g1}\\
g(\vx,\vy,a,2t) &=& \sqrt{\tfrac{\delta+1}{\delta}}\,\, e(\vx,\vy,a-\Lr;t) \quad\;\text{ for } a\in \{\Lr+1,\cdots,\Lr+\Lc\},\label{eq:g2}\\
g(\vx,\vy,a,2t+1) &=& \sqrt{\tfrac{\delta+1}{\delta}}\,\, h(\vx,\vy,a;t+1) \quad \;\;\;\text{ for } a\in \{1,\cdots,\Lr\},\label{eq:g3}\\
g(\vx,\vy,a,2t+1) &=& 0 \quad \quad\quad \quad\quad \quad\quad\quad \quad\,  \quad \quad\text{for } a\in \{\Lr+1,\cdots,\Lr+\Lc\}.\label{eq:g4}
\end{eqnarray}
\end{itemize}

Now, it is easy to see that, for all $t \ge 0$,
\begin{align}
\vx^{2t+1}_{s,i} &= \vu^t_i, \quad \quad \text{for } i \le m,\label{eqn:sym1}\\
\vx^{2t}_{s,i} &= \vv^{t+1}_{i-m}, \quad \quad \text{for } m+1\le i \le m+n.\label{eqn:sym2}
\end{align}

Now we are ready to prove Lemma~\ref{lemma:SE} by applying Theorem~\ref{thm:SE}.

Fix $a' \in \{\Lr+1,\dotsc,\Lr+\Lc\}$ and $t \ge 1$. Let $a = a'-\Lr$ and choose function $\psi(\vx,\vy) = \{\teta_{t,a}(\vy(a) - \vx(a)) - \vy(a)\}^2$. Then, 
\begin{align}
\lim_{n_0\to \infty} \frac{1}{n_0} \sum_{j' \in C_{a'}} \psi(\vx^{2t}_{s,j}, \vy_{s,j}) 
&= \lim_{n_0\to \infty} \frac{1}{n_0} \sum_{j' \in C_{a'}} [\teta_{t,a}(\vy_{s,j'}(a) - \vx^{2t}_{s,j'}(a)) - \vy_{s,j'}(a)]^2\no\\
&\stackrel{(a)}{=} \lim_{n_0\to \infty} \frac{1}{n_0} \sum_{j \in C_a} [\teta_{t,a}(\vy_{j}(a) - \vv^{t+1}_{j}(a)) - \vy_{j}(a)]^2\no\\
&\stackrel{(b)}{=} \lim_{n_0\to \infty} \frac{1}{n_0} \sum_{j \in C_a} [\eta_{t,j}(x_{j} - \tx^{t+1}_j) - x_{j}]^2\no\\
&= \lim_{n_0\to \infty} \frac{1}{n_0} \sum_{j \in C_a} (x^{t+1}_j - x_{j})^2 = \lim_{n_0\to \infty} \frac{1}{n_0} \|x^{t+1}_{C_a} - x_{C_a}\|^2.\label{eqn:thm-to-lem}
\end{align}
Here $(a)$ follows from Eq.~\eqref{eqn:sym2} and the definition of $\vy_{s,j}$ (note that $j' = j -m$); $(b)$ follows from the fact $a = \gr(j)$ and Proposition~\ref{pro:spatial-special}.

Applying Theorem~\ref{thm:SE}, we have almost surely
\begin{eqnarray}\label{eqn:thmSE}
\lim_{n_0\to \infty} \frac{1}{n_0} \sum_{j' \in C_{a'}} \psi(\vx^{2t}_{s,j}, \vy_{s,j}) = \E[\eta_{t,a}(X+Z) - X]^2,
\end{eqnarray}
with $X\sim p_X$ and $Z\sim \normal(0,\Sigma^{2t}_{aa})$. Therefore, to complete the proof we need to show that
\begin{eqnarray}\label{eqn:claim_final}
(\Sigma^{2t}_{aa})^{-1} =  \sum_{i \in \Rows} W_{i,a} \phi_i(t)^{-1}.
\end{eqnarray}
Note that Eq.~\eqref{eq:GeneralSE1} reduces to:
\begin{eqnarray}
\Sigma^t = \frac{m_0}{m+n} \sum_{b'=1}^\Lr \hSigma_{b'}^{t-1} + \frac{n_0}{m+n} \sum_{b' = \Lr+1}^{\Lr+\Lc} \hSigma_{b'}^{t-1}.
\end{eqnarray}
By definition of function $g$ (see Eq.s~\eqref{eq:g1}-~\eqref{eq:g4}), it is easy to see that Eq.~\eqref{eq:GeneralSE2} reduces to:  
\begin{align}
(\hSigma^{2t}_{a'})_{ij} = 
\begin{cases}
0, \quad \quad \quad \quad \quad \quad& \text{for }a'\in[\Lr],\\
\frac{\delta+1}{\delta} \Lr \sqrt{W_{i,a} W_{j,a}}\, \E\{\eta_{t-1,a}(X - Z^t_a) - X\}^2,& \text{for }a'\in\{\Lr+1,\cdots,\Lr+\Lc\}, i,j \in [\Lr],\\
*,\quad \quad \quad \quad \quad \quad& \text{otherwise.}
\end{cases}
\end{align}
Here $a = a' - \Lr$, $X \sim p_X$ and $Z_a^t \sim \normal(0,\Sigma^{2t}_{aa})$. Also,
\begin{align}
(\hSigma^{2t-1}_{a'})_{ij} = 
\begin{cases}
\frac{\delta+1}{\delta} \Lr \sqrt{W_{a',i} W_{a',j}}\tQ^{t}_{a',i}\tQ^{t}_{a',j}\, \{\sigma^2 + \Sigma^{2t-1}_{a'a'}\},& \text{for }a'\in[\Lr], i,j \in [\Lc],\\
0, \quad \quad \quad \quad \quad \quad& \text{for }a'\in\{\Lr+1,\cdots,\Lr+\Lc\},\\
*,\quad \quad \quad \quad \quad \quad& \text{otherwise.}
\end{cases}
\end{align}
Consequently, we obtain
\begin{align*}
\Sigma^{2t}_{aa} & = \frac{m_0}{m+n} \sum_{b=1}^{\Lr} (\hSigma^{2t-1}_b)_{aa}\\
&=\frac{m_0\Lr}{m+n}\cdot \frac{\delta+1}{\delta} \sum_{b=1}^{\Lr} W_{b,a} (\tQ^t_{b,a})^2
\{\sigma^2 + \Sigma^{2t-1}_{bb} \}\\
&= \frac{m_0\Lr}{m+n}\cdot \frac{\delta+1}{\delta} \sum_{b=1}^{\Lr} W_{b,a} (\tQ^t_{b,a})^2
\{\sigma^2 + \f{n_0}{m+n} \sum_{c'=\Lr+1}^{\Lr+\Lc} (\hSigma^{2t-2}_{c'})_{bb} \}\\
&= \frac{m_0\Lr}{m+n}\cdot \frac{\delta+1}{\delta} \sum_{b=1}^{\Lr} W_{b,a} (\tQ^t_{b,a})^2
\bigg\{\sigma^2 + \frac{n_0 \Lr}{m+n}\cdot \frac{\delta+1}{\delta} \sum_{c = 1}^{\Lc} W_{b,c} \mmse((\Sigma^{2t-2}_{cc})^{-1})\bigg\}\\
& = \sum_{b=1}^{\Lr} W_{b,a} (\tQ^t_{b,a})^2 \bigg\{\sigma^2 + \frac{1}{\delta} \sum_{c = 1}^{\Lc} W_{b,c}\,\, \mmse((\Sigma^{2t-2}_{cc})^{-1})\bigg\}.
\end{align*}

We prove relation~\eqref{eqn:claim_final} using induction on $t$. The induction basis ($t=0$) is trivial. Suppose that the claim holds for $t-1$. Then,
\begin{align*}
\Sigma^{2t}_{aa}&= \sum_{b=1}^{\Lr} W_{b,a} (\tQ^t_{b,a})^2 \bigg\{\sigma^2 + \frac{1}{\delta} \sum_{c = 1}^{\Lc} W_{b,c}\,\, \mmse(\sum_{i\in \Rows} W_{i,c} \phi_i(t-1)^{-1})\bigg\}\\
& = \sum_{b=1}^{\Lr} W_{b,a} (\tQ^t_{b,a})^2 \phi_b(t)\\
&= \sum_{b=1}^{\Lr} W_{b,a} \frac{\phi_b(t)^{-2}}{\left(\sum_{k=1}^{\Lr} W_{k,a} \phi_k(t)^{-1}\right)^2}\phi_b(t)\\
&= \left( \sum_{b=1}^{\Lr} W_{b,a} \phi_b(t)^{-1} \right)^{-1}.
\end{align*}
This proves the induction claim for $t$. Combining~\eqref{eqn:thm-to-lem},\eqref{eqn:thmSE} and~\eqref{eqn:claim_final}, Lemma~\ref{lemma:SE} follows.

\subsubsection{Proof of Proposition~\ref{pro:spatial-special}}
\label{proof:spatial-special}
We prove the result by induction on $t$. For $t = 0$, the claim follows from our definition. Suppose that the claim holds for $t-1$, we prove that for $t$. 

Writing Eq.~\eqref{eq:BipartiteAMP1} for coordinate $i$, we have
\begin{eqnarray}
\vu_i^{t} = \sum_{k\in [n]} \tA_{ik} e(\vv^t_k,\vy_k,\gr(k);t) - 
\frac{1}{m}\left(\sum_{k\in [n]}\frac{\partial e}{\partial \vv}(\vv_k^t,\vy_k,\gr(k);t)\right) h(\vu^{t-1}_i, \vw_i,\gr(i); t-1)
\end{eqnarray}
Restricting to coordinate $\gr(i)$, we get
\begin{eqnarray}\label{eqn:u-coord}
\begin{split}
\vu_i^{t}(\gr(i)) = &\sum_{k\in [n]} \tA_{ik} [e(\vv^t_k,\vy_k,\gr(k);t)]_{\gr(i)}  \\
&-\frac{1}{m}\sum_{k\in [n]}[\frac{\partial e}{\partial v}(\vv_{k}^t(\gr(k)),\vy_{k}(\gr(k)),\gr(k);t)]_{\gr(i)}\,\, [h(\vu^{t-1}_i, \vw_i,\gr(i); t-1)]_{\gr(k)}.\\
\end{split}
\end{eqnarray}
Here, we have used the fact that $e(\vv^t_k,\vy_k, \gr(k),t)$ does not depend on $\vv^t_{k,l}$ for $l \neq \gr(k)$. 

Substituting for $e$ and $h$, we have
\begin{align}
\sum_{k\in [n]} \tA_{ik} [e(\vv^t_k,\vy_k,\gr(k);t)]_{\gr(i)} &= \sum_{k\in [n]} \tA_{ik} \sqrt{\Lr\, W_{\gr(i),\gr(k)}} 
\{\teta_{t-1,\gr(k)}(\vy_{k}(\gr(k)) - \vv^t_{k}(\gr(k))) -\vy_{k}(\gr(k))\} \nonumber\\
&= \sum_{k\in [n]} A_{ik} 
\{\eta_{t-1,k}(x_{k} - \tx^t_k) -x_{k}\},\label{eqn:term1-u}
\end{align}
where we used the induction hypothesis in the last step.
Furthermore, 
\begin{align}
&\frac{1}{m}\sum_{k\in [n]}[\frac{\partial e}{\partial v}(\vv_{k}^t(\gr(k)),\vy_{k}(\gr(k)),\gr(k);t)]_{\gr(i)}\,\, [h(\vu^{t-1}_i, \vw_i,\gr(i); t-1)]_{\gr(k)}\nonumber\\
&=-\frac{1}{m}\sum_{k\in [n]}\,\teta'_{t-1,\gr(k)}\left(\vy_{k}(\gr(k)) - \vv^t_{k}(\gr(k))\right) \sqrt{\Lr\,W_{\gr(i),\gr(k)}}
\left(\vu^{t-1}_{i}(\gr(i)) - \vw_{i}(\gr(i))\right) \sqrt{\Lr\,W_{\gr(i),\gr(k)}} \tQ^{t-1}_{\gr(i),\gr(k)}\nonumber\\
&= -\frac{1}{m}\sum_{k\in [n]}\, \Lr W_{\gr(i),\gr(k)}\tQ^{t-1}_{\gr(i),\gr(k)}\,\eta'_{t-1,k}(x_{k} - \tx^t_k) \,  (\tr^{t-1}_i - w_i)\nonumber\\
&= -\ons^t_i (\tr^{t-1}_i - w_i),\label{eqn:term2-u}
\end{align}
where we used the induction hypothesis in the second equality. The last equality follows from the definition of $\ons^t_i$ (see Eq.~\eqref{eqn:ons_def}); 


Using~\eqref{eqn:term1-u} and~\eqref{eqn:term2-u} in~\eqref{eqn:u-coord}, we obtain
\begin{eqnarray}
\vu_{i}^{t} (\gr(i))= \sum_{k\in [n]} A_{ik} 
\{\eta_{t-1,k}(x_{k} - \tx^t_k) -x_{k}\}
+\ons^t_i (\tr^{t-1}_i - w_i) = \tr^t_i,
\end{eqnarray}
where the second equality follows from~\eqref{eqn:tr}. This proves the induction claim for $\vu_{i}^{t}(\gr(i))$.

Next we prove the claim for $\vv^{t+1}_{j}(\gr(j))$. Writing Eq.~\eqref{eq:BipartiteAMP2} for coordinate $j$, we have
\begin{eqnarray}
\vv_j^{t+1} = \sum_{l\in [m]} \tA_{lj} h(\vu^t_l,\vw_l,\gr(l);t) - 
\frac{1}{m}\left(\sum_{l\in [m]}\frac{\partial h}{\partial \vu}(\vu^t_l,\vw_l,\gr(l);t)\right)
e(\vv^t_j,\vy_j,\gr(j);t)
\end{eqnarray}
Restricting to coordinate $\gr(j)$, we get
\begin{eqnarray}\label{eqn:v-coord}
\begin{split}
\vv_{j}^{t+1}(\gr(j)) =& \sum_{l\in [m]} \tA_{lj} [h(\vu^t_l,\vw_l,\gr(l);t)]_{\gr(j)} \\
&- \frac{1}{m} \sum_{l\in [m]} [\frac{\partial h}{\partial u}(\vu^t_{l}(\gr(l)),\vw_{l}(\gr(l)),\gr(l);t)]_{\gr(j)}
[e(\vv^t_j,\vy_j,\gr(j);t)]_{\gr(l)}.
\end{split}
\end{eqnarray}
Here, we have used the fact that $h(\vu^t_l,\vw_l, \gr(l),t)$ does not depend on $\vu^t_{l,k}$ for $k \neq \gr(l)$. 

Substituting for $e$ and $h$, we have
\begin{align}
\sum_{l\in [m]} \tA_{lj} [h(\vu^t_l,\vw_l,\gr(l);t)]_{\gr(j)} 
&= \sum_{l \in [m]} \tA_{lj} \sqrt{\Lr\, W_{\gr(l),\gr(j)}}\, \tQ^t_{\gr(l),\gr(j)}\,(\vu^t_{l}(\gr(l)) - \vw_{l}(\gr(l)))\nonumber\\
&= \sum_{l \in [m]} A_{lj} Q^t_{l,j}\,(\tr^t_l - w_l),\label{eqn:term1-v}
\end{align}
where in the last step we used the result $\vu^t_{l}(\gr(l)) = \tr^t_{l}$, proved above. Moreover,
\begin{align}
&\frac{1}{m}\sum_{l\in [m]} [\frac{\partial h}{\partial u}(\vu^t_{l}(\gr(l)),\vw_{l}(\gr(l)),\gr(l);t)]_{\gr(j)}
[e(\vv^t_j,\vy_j,\gr(j);t)]_{\gr(l)}\nonumber\\
&= \frac{1}{m} \sum_{l\in [m]} \sqrt{\Lr W_{\gr(l),\gr(j)}}\, \tQ^t_{\gr(l),\gr(j)} \{\teta_{t-1,\gr(j)}(\vy_{j}(\gr(j)) - \vv^t_{j}(\gr(j))) - \vy_{j}(\gr(j))\}\,
\sqrt{\Lr W_{\gr(l),\gr(j)}}\nonumber\\
& = \frac{1}{m}\left(\sum_{l\in [m]}\Lr W_{\gr(l),\gr(j)} Q^t_{l,j} \right) \{\eta_{t-1,j}(x_{j} - \tx^t_j) - x_{j}\}\nonumber\\
&=\eta_{t-1,j}(x_{j} - \tx^t_j) - x_{j}. \label{eqn:term2-v}
\end{align}

Using~\eqref{eqn:term1-v} and~\eqref{eqn:term2-v} in \eqref{eqn:v-coord}, we obtain
\begin{eqnarray}
\vv^{t+1}_{j}(\gr(j)) = \sum_{l \in [m]} A_{lj} Q^t_{l,j}\,(\tr^t_l - w_l) - \{\eta_{t-1,j}(x_{j} - \tx^t_j) - x_{j}\} = \tx^{t+1}_j,
\end{eqnarray}
where the second equality follows from~\eqref{eqn:tx}. This proves the induction claim for $\vv_{i}^{t+1}(\gr(i))$.

\section{Proof of Theorem~\ref{thm:SE}}
\label{sec:Proof}

\subsection{Definitions and notations}\label{subsec:definitions}
 Letting $m^t = f(x^t;t)$ for $t \ge 0$, Eq.~\eqref{eq:AMPGeneralDef}, becomes
\begin{eqnarray}
x^{t+1} = A\, m^t - \Ons_t \, m^{t-1}\, . \label{eq:AMPGeneralDef2}
\end{eqnarray}
This is initialized with $m^{-1} = 0$ and $m^0=m^{0,N}\in\Space$, a sequence of
deterministic vectors in $\Space$, with $\lim\sup_{N \to \infty} N^{-1}
\sum_{i=1}^N \|\vm^0_i\|^{2k-2} < \infty$. Also recall that the
vectors $y = (\vy_1,\dotsc,\vy_N) \in \Space$ are a fixed sequence
indexed by $N$, with converging empirical distributions. 

The idea of the proof is to study the stochastic process
$\{x^0,x^1,\dots,x^t,\dots\}$ taking values in $\Space$ without
conditioning on the matrix $A$. Instead, for each $t$, we will compute
the conditional distribution of $x^{t+1}$ given $x^0,\ldots,x^{t}$, and hence
$m^0,\ldots,m^{t}$.
More precisely, let $\sigal{S}_{t}$ be the $\sigma$-algebra generated by these variables.
We will compute the conditional distributions
$x^{t+1}|_{\sigal{S}_t}$, by characterizing the conditional distribution of the matrix $A$
given this filtration.

Throughout the proof, we identify $\Space$ with the set of matrices
$\reals^{N\times q}$. Adopting this convention, the linear operator
$\Ons_t$ can be more conveniently identified with the $q\times q$
matrix
\begin{eqnarray}
 \Om_t = \f{1}{N} \left( \sum_{j \in [N]} \f{\partial f^j}{\partial
     \vx}(\vx^t_j,t)\right)\, .
\end{eqnarray}
We therefore have $\Ons_tm_{t-1} = m_{t-1}\Om_t^{\sT}$ and
the equations for $x^1,\ldots,x^{t}$ can be written in matrix form as:
\begin{align}\label{eq:matrixYM}
\underbrace{\left[x^1|x^2+m^0\Om_1^{\sT}|\ldots|x^t+m^{t-2}\Om_{t-1}^{\sT}\right]}_{Y_{t-1}}&=A\underbrace{[m^0|\ldots|m^{t-1}]}_{M_{t-1}}.
\end{align}
In short $Y_{t-1}=A M_{t-1}$. Here and below we use
$[Q|P]$ to denote the matrix obtained by concatenating $Q$ and $P$
horizontally. 

We also introduce the notation $m_{\pl}^t$ for the projection of $m^t$
onto the column space of $M_{t-1}$.
More precisely, $m_{\pl}^t\in\reals^{N\times q}$ is the matrix whose
columns are the projections of the columns of $m^t$. This can be
written as 
\begin{align}
m_{\pl}^t = \sum_{i=0}^{t-1} m^i \alpha_i\,,\label{eq:alpha-defined}
\end{align}
where $\alpha_i\in\reals^{q\times q}$, $0\le i\le t-1$ contain the
coefficients of these projections.
Defining by $m_{\perp}^t=m^t-m_{\pl}^t$ the perpendicular component ,
we have $M_{t-1}^{\sT}m_{\perp}^t=0$.
We further denote by $\alpha \in \reals^{tq \times q}$ the matrix obtained by concatenating $\alpha_i$'s vertically. Using this notation, we have
\begin{eqnarray}
m_{\pl}^t = M_{t-1} \alpha\,.
\end{eqnarray}
%
%
For an integer $\ell \ge 1$, let $(\ell) = \{(\ell-1)q+1 , \dotsc, \ell q\}$. For a matrix $u$ and set of 
indices $I,J$, we let $u_{I,J}$ denote the submatrix formed by the rows in $I$ and columns in $J$.
We further let $u_{I}$ denote the submatrix containing just the rows in $I$.
For $v = (\vv_1, \dotsc,\vv_N)\in \Space$ and a set of indices $I = \{i_1,\dotsc,i_r\}$, let $v_I = (\vv_{i_1},\dotsc, \vv_{i_r})$.

Given $v\in \mSpace$ and $\varphi:\reals^q \to \reals^q$, we write $\varphi(v) = (\varphi(\vv_1),\dotsc,\varphi(\vv_m))$. 
We also define $\nphi(v) = [\f{\partial \varphi}{\partial \vv}(\vv_1), \dotsc, \f{\partial \varphi}{\partial \vv}(\vv_m)]^\sT$ with $\f{\partial \varphi}{\partial \vv} \in \reals^{q \times q}$ denoting the Jacobian matrix of $\varphi$. Note that $\nphi(v) \in \reals^{mq \times q}$. 
    
For $u \in \reals^{mq\times q}$, let $\<u\> = (1/m) \sum_{i=1}^m u_{(i)} \in \reals^{q\times q}$.
Also, for $u,v \in \Space$ we define 
\begin{eqnarray*}
\<u,v\> = \frac{1}{N} \sum_{i=1}^N \vu_i \vv_i^\sT \in \reals^{q \times q}.
\end{eqnarray*}
Note that $\<u,v\> = (1/N) u^\sT v $, as we regard $\Space \equiv \reals^{N\times q}$.

Given two random variables $X,Y$, and a $\sigma$-algebra
$\sigal{S}$, the notation
$X|_{\sigal{S}}\deq Y$ means that for
any integrable function $\phi$ and for any random variable $Z$ measurable
on $\sigal{S}$, $\E\{\phi(X)Z\}= \E\{\phi(Y)Z\}$.
In words we will say that $X$ is distributed as
(or is equal in distribution to) $Y$ \emph{conditional
on} $\sigal{S}$.
In case $\sigal{S}$ is the trivial $\sigma$-algebra we simply write
$X\deq Y$ (i.e. $X$ and $Y$ are equal in distribution).
For random variables $X,Y$ the notation $X\asequal Y$ means that $X$ and $Y$ are equal almost surely.

The large system limit will be denoted as
$\lim_{N\to\infty}$. In the large system limit,
we use the notation $\order_t(1)$ to
represent a matrix in $\reals^{tq \times q}$ (with $t$ fixed) such that all of its coordinates converge to 0 almost surely as $N\to\infty$.

The indicator function of property ${\cal A}$
is denoted by $\indicator({\cal A})$ or $\indicator_{{\cal A}}$. The normal distribution
with mean $\mu$ and variance $v^2$ is represented as $\normal(\mu,v^2)$.
%
%
\subsection{Main technical Lemma}

We will say that a convergent sequence of mappings $(\cF_N)_{N\in\naturals}$ is
non-trivial if there exists $\eps_0>0$ such that, for each $N$, $t\ge
0$, $a\in [q]$, $i\in [N]$,  $\vgamma\in\reals^q$ with
$\|\vgamma\|_2=1$, $b\in\reals$, we have
\begin{eqnarray*}
\int \big(\gamma^\sT g(\vx,\vy_i,a,t)-b\big)^2\de\vx\ge \eps_0\, .
\end{eqnarray*}
This condition is useful to rule out trivial degeneracies.
\begin{lemma}\label{lem:elephantSym}
Let $\{(A(N),\cF_N,x^{0,N})\}_N$ be a converging sequence of  AMP
instances as in
Theorem \ref{thm:SE} with $\cF_N$ non-trivial. Then the following hold
for all $t\in\naturals$
\begin{itemize}
\item[$(a)$]
\begin{align}
x^{t+1}|_{\sigal{S}_{t}}&\deq \sum_{i=0}^{t-1}x^{i+1}{\alpha_i}+ {\tA} m_\perp^t+\tM_{t-1}\order_{t-1}(1)\,,\label{eq:elephantSym-a}
\end{align}
where ${\tA}$ is an independent copy of $A$. The matrix $\tM_t$ is such that its columns form an orthogonal basis for the column space of $M_t$ and $\tM_t^\sT \tM_t=N\,\id_{tq\times tq}$.
Recall that, $\order_{t-1}(1)\in \reals^{(t-1)q \times q}$ is a random vector that converges to 0
almost surely as $N\to\infty$.

\item[$(b)$] For any pseudo-Lipschitz function $\phi:(\reals^q)^{t+2}\to\reals$ of order $k$,
\begin{align}
\lim_{N\to\infty}\f{1}{|C_a^N|}\sum_{i \in C_a^N} \phi(\vx_i^1,\ldots,\vx_i^{t+1}, \vy_i)
&\asequal\E\big[\phi( Z_a^1, \ldots, Z_a^{t+1}, Y_a)\big]\,. \label{eq:elephantSym-b}
\end{align}
where $(Z_a^1,\dots,Z_a^{t+1})$ is a Gaussian vector independent of
$Y_a \sim P_a$  and, for each $i$,
$Z_a^i\sim \normal(0,\Sigma^i)$

\item[$(c)$] For all $1\leq r,s\leq t, a \in [q]$ the following equations hold and all limits exist, are bounded and have degenerate distribution
(i.e. they are constant random variables):
\begin{align}
\lim_{N\to\infty}\< x_{C^N_a}^{r+1},x_{C^N_a}^{s+1}\>&\asequal\lim_{N\to\infty} \< m^{r},m^{s}\>\,. \label{eq:elephantSym-c}
\end{align}

\item[$(d)$] 
Consider any set of $q$ Lipschitz continuous functions $\varphi^a: \reals^q \times \reals^q \to \reals^q$. For all $1\leq r,s\leq t$, the following equations hold and all limits exist, are bounded and have degenerate distribution
(i.e. they are constant random matrices):
\begin{align}
\lim_{N\to\infty}\< x^{r+1}_{C^N_a}, \varphi(x^{s+1}_{C^N_a},y_{C^N_a})\>&\asequal
\lim_{N\to\infty}\< x^{r+1}_{C^N_a},x^{s+1}_{C^N_a}\> \< \nphi^a(x^{s+1}_{C^N_a},y_{C^N_a})\>\,. \label{eq:elephantSym-d1}
\end{align}

The Jacobians here are computed according to the first component. Define $\varphi : \Space \times \Space \to \Space$ by letting $v' = \varphi(u,v)$ be given by $\vv'_i = \varphi^a(\vu_i,\vv_i)$ for $i\in C^N_a$. Let $\nphi \in \reals^{Nq \times q}$ be the matrix obtained by concatenating the matrices $\nphi^a \in \reals^{|C^N_a|q\times q}$, for $a\in [q]$. Then, Eq.~\eqref{eq:elephantSym-d1} implies that for all $1\leq r,s\leq t$, the following equations hold:
\begin{align}
\lim_{N\to\infty}\< x^{r+1}, \varphi(x^{s+1},y)\>&\asequal \lim_{N\to\infty}\< x^{r+1},x^{s+1}\> \< \nphi(x^{s+1},y)\>\,. \label{eq:elephantSym-d}
\end{align}
\item[$(e)$] For $\ell=k-1$ and $a\in [q]$, the following holds almost surely
\begin{eqnarray}
\lim_{N\to\infty} \f{1}{|C^N_a|} \sum_{i\in C^N_a} \|\vx_i^{t+1}\|^{2\ell}<\infty\,. \label{eq:elephantSym-e}
\end{eqnarray}

\item[$(f)$]  For all $0\leq r\leq t$ the following limit exists and there are positive constants $\lbq_r$ (independent of $N$) such that almost surely
\begin{align}
\lim_{N\to\infty}\< m_\perp^r, m_\perp^r\> - \lbq_r\, \id_{q \times q} \succeq 0 \,. \label{eq:elephantSym-f}
\end{align}

\end{itemize}
\end{lemma}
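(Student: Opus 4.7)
The plan is to prove parts (a)--(f) by joint induction on $t$, following the conditioning strategy of Bolthausen as adapted in \cite{BM-MPCS-2011}---carrying all six claims through the induction simultaneously, since each one is needed to close the next. The engine is the conditional distribution of the symmetric Gaussian matrix $A$ given the sub-$\sigma$-algebra $\sigal{S}_t$. Since $\sigal{S}_t$ is generated by the linear constraint $A M_{t-1} = Y_{t-1}$ from Eq.~\eqref{eq:matrixYM}, and $A$ is symmetric Gaussian, its conditional law decomposes as the unique symmetric matrix supported on the column span of $M_{t-1}$ that realizes this constraint, plus an independent symmetric Gaussian $\tilde A$ restricted to the orthogonal complement on both sides.

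\textbf{Base case.} At $t = 0$ no conditioning is needed: $x^1 = A m^0$ is Gaussian, and its empirical covariance across rows in $C_a^N$ converges to $\Sigma^1$ by Eq.~\eqref{eq:InitialSE} and the converging-instances hypothesis. A standard pseudo-Lipschitz concentration argument, combined with the moment convergence $\E_{\hat P_a}\|Y_a\|^{2k-2} \to \E_{P_a}\|Y_a\|^{2k-2}$, yields (b) at $t=0$; parts (c), (d), (e) are specializations of (b), and (f) at $r = 0$ reduces to the non-triviality hypothesis on $\cF_N$ applied to $m^0$.

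\textbf{Inductive step.} Assume (a)--(f) through $t-1$. Decompose $m^t = m^t_\| + m^t_\perp$ with $m^t_\| = M_{t-1}\alpha$ as in Eq.~\eqref{eq:alpha-defined}; applying the conditional decomposition of $A$ to $m^t$ and simplifying $A m^t_\| = Y_{t-1}\alpha$ using the block form of $Y_{t-1}$, the parallel contribution reproduces $\sum_{i=0}^{t-1} x^{i+1}\alpha_i$ after cancellation with the Onsager correction $\Om_t m^{t-1}$---this cancellation is exactly the purpose of the Onsager term and uses (d) at step $t-1$ to evaluate the residual Jacobian averages. The remaining pieces are $\tilde A m^t_\perp$ (with $\tilde A$ an independent copy of $A$) plus linear combinations of columns of $M_{t-1}$ whose coefficients, by the inductive (c), (d) and (f), tend to zero almost surely; this yields (a) with the $\tM_{t-1}\order_{t-1}(1)$ remainder. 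Given (a), part (b) follows because, restricted to rows in $C_a^N$, the main term $(\tilde A m^t_\perp)_{C_a^N}$ is a centered Gaussian whose empirical covariance converges to $\lim_N \<m^t_\perp, m^t_\perp\> \succeq \lbq_t \id$ by the inductive (f); combining with the inductive joint Gaussian limit of $(\vx^1_i, \ldots, \vx^t_i)_{i \in C_a^N}$ and matching covariances via Eqs.~\eqref{eq:GeneralSE1}-\eqref{eq:GeneralSE2} and the definition of $g$ produces the $(t+1)$-fold Gaussian limit with the correct $\Sigma^{t+1}$. Parts (c), (e) are immediate specializations of (b) using the pseudo-Lipschitz test functions $\phi = \gamma^\sT \vx^{r+1}(\vx^{s+1})^\sT \delta$ and $\phi(\vx^1,\ldots,\vx^{t+1},\vy) = \|\vx^{t+1}\|^{2(k-1)}$ respectively; (d) is Gaussian integration by parts applied to the limit vector, $\E[Z_a^{r+1}\varphi^a(Z_a^{s+1},Y_a)^\sT] = \E[Z_a^{r+1}(Z_a^{s+1})^\sT]\,\E[\nabla\varphi^a(Z_a^{s+1},Y_a)]$, translated back through (b). Finally (f) at $t$: if $\lim_N\<m^t_\perp, m^t_\perp\>$ were singular in a unit direction $\gamma$, then (c) and (d) would force $\gamma^\sT g(Z_a^t, Y_a, a, t)$ to equal in $L^2$ a fixed linear combination of the past $\gamma^\sT g$'s; since $Z_a^t$ is Gaussian with nontrivial covariance and independent of $Y_a$, this contradicts the non-triviality of $\cF_N$ and supplies a uniform lower bound $\lbq_t > 0$.

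\textbf{Main obstacle.} The core technical difficulty is propagating pseudo-Lipschitz (rather than merely Lipschitz) control of order $k$ through the induction. One must convert the $(2k-2)$-moment hypothesis on $Y_a$ and the inductive $(2k-2)$-moment bound on the iterates (part (e)) into uniform integrability strong enough to integrate test functions of polynomial growth of degree $k$ against the asymptotic Gaussian. This is why (e) must be carried as part of the joint induction rather than derived \emph{a posteriori}, and requires a careful truncation/approximation step to absorb the $\tM_{t-1}\order_{t-1}(1)$ error term into the Gaussian main term while preserving pseudo-Lipschitz convergence.
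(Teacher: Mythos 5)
Your overall architecture is the paper's: a joint induction on $t$ carrying all six claims, driven by Bolthausen's conditioning trick ($A|_{\sigal{S}_t}\deq \E[A|\sigal{S}_t]+P^\perp_{M_{t-1}}\tA P^\perp_{M_{t-1}}$), with the Onsager term cancelling the parallel contribution via the inductive $(d)$, and $(f)$ established by lower-bounding the conditional variance of $\vgamma^\sT g(\,\cdot\,)$ using non-triviality. However, the internal wiring of your inductive step does not close, for two concrete reasons. First, you claim $(e)$ is an ``immediate specialization of $(b)$'' with the test function $\phi=\|\vx^{t+1}\|^{2(k-1)}$. That function is pseudo-Lipschitz of order $2k-2$, not $k$, so $(b)$ does not apply to it whenever $k>2$; worse, the proof of $(b)$ at step $t$ \emph{requires} $(e)$ at step $t$ (the moment bounds on $\|a_i\|^{2k-2}$, $\|b_i\|^{2k-2}$ needed to absorb the $\tM_{t-1}\order_{t-1}(1)$ error and to verify the SLLN hypotheses), so deriving $(e)$ from $(b)$ is circular. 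You half-acknowledge this in your closing paragraph, but the derivation you actually give contradicts it. The paper proves $(e)$ by a separate direct argument (the truncation argument of Step $\pty{B}_t(e)$ in \cite{BM-MPCS-2011}) \emph{before} proving $(b)$.

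Second, $(c)$ cannot be read off from $(b)$ with the test function $\vx^{r+1}(l)\vx^{s+1}(k)$: statement $(b)$ only pins down the marginal covariances $\Sigma^i$ of the limit Gaussians $Z_a^i$, not the cross-covariances $\E\{Z_a^{r+1}(Z_a^{s+1})^\sT\}$ for $r\neq s$, which is exactly what \eqref{eq:elephantSym-c} asserts equals $\lim\<m^r,m^s\>$. Moreover $(c)$ at step $t$ is an input to the paper's proof of $(b)$ at step $t$ (it identifies $\lim\<x^{t+1}_{C_a^N},x^{t+1}_{C_a^N}\>$ with $\lim\<m^t,m^t\>$ and hence with $\Sigma^{t+1}$, and feeds the convergence of $\Gamma_t^2=\lim\<m^t_\perp,m^t_\perp\>$). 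The correct order, as in the paper, is $(f)\to(a)\to(c)\to(e)\to(b)\to(d)$, with $(c)$ proved directly from the representation \eqref{eq:elephantSym-a} together with the Gaussian-matrix computations (Lemma \ref{lem:prop-Gaussian-matrix}), not as a corollary of $(b)$. A minor further point: at $t=0$, claim $(f)$ follows from $\Sigma^1=\sum_b c_b\hSigma^0_b\succ 0$ (the positive-definiteness assumption in the converging-sequence definition), not from non-triviality of $\cF_N$, which only enters at later steps.
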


%
%
\subsubsection{Proof of Theorem \ref{thm:SE}}

First assume that the sequence of functions $\cF_N$ is non-trivial.
Theorem~\ref{thm:SE} follows readily from
Lemma~\ref{lem:elephantSym}. More specifically, Theorem~\ref{thm:SE}
is obtained by applying Lemma~\ref{lem:elephantSym}$(b)$ to functions
$\phi(\vx^1_i,\dotsc,\vx^{t}_i) = \psi(\vx^t_i,\vy_i)$. 

Consider then the case in which $\cF_N$ is not non-trivial. In this
case we perturb the functions $g(\vx,\vy,a,t)$ as follows. Let $\varphi(\vx) : \reals^q \to \reals^q$
be a bounded smooth function. Define
\[
g^{\epsilon}(\vx,\vy,a,t) = g(\vx,\vy,a,t) + \epsilon \,\varphi(\vx).
\]
The resulting sequence of instances is then non-trivial and
state evolution applies. Call $\Sigma^t(\eps)$ the resulting
state evolution sequence, and denote by $x^t(\epsilon)$ the corresponding orbit.
Applying Theorem~\ref{thm:SE}, we have
\begin{eqnarray}\label{eqn:perturb1}
\lim_{N \to \infty} \frac{1}{N} \sum_{i=1}^N \psi(\vx^t_j(\epsilon), \vy_i) = \E\{\psi(Z^t_a(\epsilon), Y_a)\},
\end{eqnarray}
with $Z^t_a(\epsilon)\sim \normal(0,\Sigma^t(\epsilon))$. In order to prove the same theorem
for the orbit $\{x^t\}_{t\ge 0}$, we need to show the following two facts:
\begin{itemize}
\item[$(i)$] $\lim_{\epsilon \to 0} \E\{\psi(Z^t_a(\epsilon), Y_a)\} = \E\{\psi(Z^t_a,Y_a)\}$, with $Z^t_a \sim \normal(0,\Sigma^t)$.

\item[$(ii)$] Let $a_N(\epsilon) = \frac{1}{N} \sum_{i=1}^N \psi(\vx^t_i(\epsilon),\vy_i)$. Then $|a_N(\epsilon) - a_N(0)| \le C\epsilon$, with constant $C$ being independent of $N$.
\end{itemize}

Given $(i)$-$(ii)$, we have
\begin{align*}
\lim_{N\to \infty} \big|a_N(0) - \E\{\psi(Z^t_a,Y_a)\}\big| \le {\limsup}_{N \to \infty} \Big\{&\big|a_N(0) - a_N(\epsilon)\big| + \big|a_N(\epsilon) - \E\{\psi(Z^t_a(\epsilon),Y_a)\}\big|\Big\}\\
+& \big|\E\{\psi(Z^t_a(\epsilon),Y_a)\} - \E\{\psi(Z^t_a,Y_a)\}\big|\\
& \le C\varepsilon + 0 + \big|\E\{\psi(Z^t_a(\epsilon),Y_a)\} - \E\{\psi(Z^t_a,Y_a)\}\big|,
\end{align*}
where the last step follows from $(ii)$ and Eq.~\eqref{eqn:perturb1}. Therefore, taking the limit
of both sides as $\epsilon \to 0$,
\begin{align*}
\lim_{N\to \infty} \big|a_N(0) - \E\{\psi(Z^t_a,Y_a)\}\big| &\le 
\lim_{\epsilon \to 0} C\varepsilon + \lim_{\epsilon \to 0} \big|\E\{\psi(Z^t_a(\epsilon),Y_a)\} - \E\{\psi(Z^t_a,Y_a)\}\big| = 0\,,
\end{align*}
where the last step follows from $(i)$. This proves Theorem~\ref{thm:SE} for $\{x^t\}_{t\ge 0}$.

It remains to prove facts $(i)$-$(ii)$. The claim in $(i)$ follows readily by applying dominated convergence theorem and noting that $\psi(\cdot,\cdot)$ is Lipschitz continuous. 

To prove $(ii)$, write 
\begin{align*}
&|a_N(\epsilon) - a_N(0)| \\
&\le \frac{1}{N} \sum_{i=1}^N \big|\psi(\vx^t_i(\epsilon),\vy_i) - \psi(\vx^t_i,\vy_i) \big|\\
&\le \frac{L'}{N}\, \sum_{i=1}^N (1 + \|\vx^t_i(\epsilon)\|^{k-1} + \|\vx^t_i\|^{k-1} + \|\vy_i\|^{k-1})\, \|\vx^t_i(\epsilon) - \vx^t_i\|\\
&\le \frac{L'}{N}\, \Big\{\sum_{i=1}^N (1 + \|\vx^t_i(\epsilon)\|^{k-1} + \|\vx^t_i\|^{k-1} + \|\vy_i\|^{k-1})^2 \Big\}^{\frac{1}{2}}
\Big\{\sum_{i=1}^N \|\vx^t_i(\epsilon) - \vx^t_i\|^2\Big\}^{\frac{1}{2}}\\
&\le {3L'}\, \Big\{ 1 + \frac{1}{N} \sum_{i=1}^N \|\vx^t_i(\epsilon)\|^{2k-2} + \frac{1}{N}\sum_{i=1}^N \|\vx^t_i\|^{2k-2}
+ \frac{1}{N}\sum_{i=1}^N \|\vy_i\|^{2k-2}\Big\}^{\frac{1}{2}}  \Big\{\frac{1}{N}\sum_{i=1}^N \|\vx^t_i(\epsilon) - \vx^t_i\|^2\Big\}^{\frac{1}{2}}\,,
\end{align*}
where second inequality holds since $\psi \in \psL(k)$ and third inequality follows by using Cauchy-Schwartz inequality. In the last expression, the term in the first braces is bounded using the assumption on the second moment of $y$ and using part $(e)$ of Lemma~\ref{lem:elephantSym} for orbit $\{x^t(\epsilon)\}$.  To bound the second braces, note that both $A$ and $\Ons_t$ in the AMP iteration
(\ref{eq:AMPGeneralDef}) have bounded operator norm (the former with
probability $1-e^{-\Theta(N)}$). Since $g(\,\cdot\,,t)$ is Lipschitz
continuous and $\varphi(\vx)$ is bounded by assumption, we conclude that $\|x^t(\eps)-x^t\|^2\le
c^t N\eps^2$ for some absolute constant $c$. This completes the proof of fact $(ii)$.

\subsection{Proof of Lemma~\ref{lem:elephantSym}}
The proof is by induction on $t$. Let $\pty{B}_{t}$ be the property that  \eqref{eq:elephantSym-a}, \eqref{eq:elephantSym-b}, \eqref{eq:elephantSym-c}, \eqref{eq:elephantSym-d}, \eqref{eq:elephantSym-e}, and \eqref{eq:elephantSym-f} hold.

\subsubsection{Induction basis: $\pty{B}_0$}

Note that $x^1 = A m^0$.

\begin{itemize}
\item[$(a)$] $\sigal{S}_0$ is generated by $y$, $x^0$ and $m^0$.
Also $m^0=m^0_\perp$ since $M_{-1}$ is an empty matrix. Hence
\[
x^1|_{\sigal{S}_1}= Am^0_\perp.
\]

\item [$(b)$] Let $\phi:{\cal V}_{q,2}\to\reals$ be a pseudo-Lipschitz function of order $k$.  Hence, $|\phi(x)|\leq L(1+\|x\|^k)$.
Given $m^0$, $y$, the random variable  $\sum_{i\in C^N_a} \phi([Am^0]_i,\vy_i)/|C^N_a|$ is a sum of independent random variables. By Lemma \ref{lem:prop-Gaussian-matrix}(a) $[Am^0]_i\deq Z$ for $Z\sim\normal(0, \<m^0, m^0\>)$.
Using Eq.~\eqref{eq:InitialSE},
\begin{align*}
\lim_{N\to \infty}  \<m^0, m^0\> &=
\sum_{a \in [q]} c_a \left(\lim_{N \to \infty} \f{1}{|C_a^N|} \sum_{i \in C^N_a} (\vm^0_i)^\sT \vm^0_i\right)\\
&=  \sum_{a \in [q]} c_a \hSigma^0_a = \Sigma^1.
\end{align*}
Hence for all $p\geq 1$, there exists a constant $c_p$ such that $\E\{\|[Am^0]_i\|^{p}\}  \le \|\<m^0_\perp,m^0_\perp\>\|_2^{\f{p}{2}}\; \E_Z\|Z\|^p< c_p$, with $Z \sim \normal(0,\id_q)$. Next, we check conditions of Theorem \ref{thm:SLLN} for $X_{N,i}\equiv \phi(\vx_i^1,\vy_i)-\E_A\{\phi(\vx_i^1,\vy_i)\}$ for $\kappa > 0$,
\begin{align}
&\frac{1}{|C_a^N|}\sum_{i\in C^N_a} \E|X_{N,i}|^{2+\kappa}\\
&=\frac{1}{|C_a^N|}\sum_{i\in C^N_a}\E\big|\phi(\vx_i^1,\vy_i)-\E_A\{\phi(\vx_i^1,\vy_i)\}\big|^{2+\kappa}\nonumber\\
&=\frac{1}{|C_a^N|}\sum_{i\in C^N_a} \left|\E_{A,\tA}\left\{\phi([\tA m^0]_i,\vy_i)-\phi([A m^0]_i,\vy_i)\right\}\right|^{2+\kappa}\nonumber\\
&\le\frac{1}{|C_a^N|}\sum_{i\in C^N_a} \left|\E_{A,\tA}\left\{\phi([\tA m^0]_i,\vy_i)-\phi([A m^0]_i,\vy_i)\right\}\right|^{2+\kappa}\nonumber\\
&\le\frac{1}{|C_a^N|}\sum_{i\in C^N_a} \left|\E_{A,\tA}\left\{\|[\tA m^0]_i-[Am^0]_i\|(1+\|\vy_i\|^{k-1}+\|[\tA m^0]_i\|^{k-1}+\|[A m^0]_i\|^{k-1})\right\}\right|^{2+\kappa}\nonumber\\
&\le c+\frac{L'c'}{|C^N_a|}\sum_{i\in C^N_a} \|\vy_i\|^{(k-1)(2+\kappa)}\nonumber\\
&\le c+L'c'|C^N_a|^{\kappa/2}\left(\frac{1}{|C_a^N|}\sum_{i\in C^N_a} \|\vy_i\|^{2(k-1)}\right)^{1+\kappa/2} \le c'' |C_a^N|^{\kappa/2}.\nonumber
\end{align}
Here $\tA$ is an independent copy of $A$, and the last inequality uses assumption on empirical moments of $\{\vy_i\}_{i \in C^N_a}$.
By applying Theorem \ref{thm:SLLN}, we get
\[
\lim_{N\to\infty}\f{1}{|C_a^N|}\sum_{i \in C^N_a} \left[\phi(\vx_i^1,\vy_i)-\E_A\{\phi(\vx_i^1,\vy_i)\}\right]\asequal 0.
\]
Hence, using Lemma \ref{lem:SLLN4us} for $v=w$ and $\psi(\vy_i)=\E_Z\{
\phi(Z,\vy_i)\}$ we get
\[
\lim_{N\to\infty}\f{1}{C^N_a}\sum_{i\in C^N_a}\E_Z[\phi(\vx^1_i,\vy_i)]\asequal\E\big\{\phi(Z_a,Y_a)\big\},
\]
with $Z_a\sim \normal(0,\Sigma^1)$ independent of $Y_a \sim P_a$. Note that $\psi$ belongs to $\psL(k)$ since $\phi$ belongs to $\psL(k)$.

\item [$(c)$] Let $\hat{A} = A_{C^N_a}$ be the submatrix formed by the rows in $C^N_A$. Using Lemma \ref{lem:prop-Gaussian-matrix}(c), conditioned on $m^0$,
\[
\lim_{N\to\infty}\< x_{C^N_a}^1,x_{C^N_a}^1\>=\lim_{N\to\infty} \<\hat{A}m^0,\hat{A}m^0\>\asequal\lim_{N\to\infty}\< m^0,m^0\> = \Sigma^1\,.
\]

\item [$(d)$] Write
\[
\lim_{N\to \infty} \<x^1_{C^N_a},\varphi(x^1_{C^N_a},y_{C^N_a})\> =  \lim_{N\to \infty} \f{1}{|C^N_a|} \sum_{i \in C^N_a} \vx^1_i [\varphi^a(\vx^1_i,\vy_i)]^\sT 
\asequal \E(Z_a [\varphi^a(Z_a,Y_a)]^\sT),
\]
where the last step follows by applying $\pty{B}_0(b)$ to the functions $\phi(\vx^1_i,\vy_i) = \vx^1_i(l) [\varphi^a(\vx_i^1,\vy_i)]_k$, for all $l,k \in [q]$.
 Furthermore, using Lemma \ref{lem:stein}, 
 \[
 \E(Z_a [\varphi^a(Z_a,Y_a)]^\sT) = \Sigma^1\, \E([\f{\partial \varphi^a}{\partial \vz}(Z_a,Y_a)]^\sT)\,.
 \] 

As proved in part $(c)$, $\lim_{N \to \infty} \<x^1_{C^N_a},x^1_{C^N_a}\> = \Sigma^1$. Also, by part $(b)$, 
the empirical distribution of $\{(\vx_i^1,\vy_i)\}_{i \in C^N_a}$ converges weakly to the distribution
of $(Z_a,Y_a)$, and consequently we get 
\[
\lim_{N \to \infty} \<\nphi^a(x^1_{C^N_a},y_{C^N_a})\>  = 
\lim_{N\to\infty} \f{1}{|C^N_a|} \sum_{i\in C^N_a} [\f{\partial \varphi^a}{\partial \vx}(\vx_i^1,\vy_i)]^\sT \asequal
\E([\f{\partial \varphi^a}{\partial \vz}(Z_a, Y_a)]^\sT)\,.
\]
This proves Eq.~\eqref{eq:elephantSym-d1}. To prove Eq.~\eqref{eq:elephantSym-d}, notice that 
\begin{eqnarray}\label{eqn:tmp1}
\<x^1,\varphi(x^1,y)\>  
=\sum_{a\in [q]} c_a \<x^1_{C^N_a},\varphi(x^1_{C^N_a}, y_{C^N_a})\>\,.
\end{eqnarray}
Also,
\begin{eqnarray}\label{eqn:tmp2}
\lim_{N\to \infty} \<x^1,x^1\> = \sum_{a\in [q]} c_a  \lim_{N\to \infty} \<x^1_{C^N_a},x^1_{C^N_a}\>
 = \sum_{a\in [q]} c_a \Sigma^1 = \Sigma^1,
\end{eqnarray}
where the last step holds since $\sum_{a\in [q]} c_a = 1$. Further,
\begin{eqnarray}\label{eqn:tmp3}
\<\nphi(x^1,y)\> = \sum_{a\in [q]} c_a \<\nphi^a(x^1_{C^N_a},y_{C^N_a})\>
\end{eqnarray}
Combining Eqs.~\eqref{eqn:tmp1}, ~\eqref{eqn:tmp2},~\eqref{eqn:tmp3} and Eq.~\eqref{eq:elephantSym-d1}, we get the desired result.

\item[$(e)$] Similar to $(b)$, conditioning on $m^0$, the term $\sum_{i\in C^N_a} \|[Am^0]_i\|^{2\ell}/|C^N_a|$ is sum of independent random variables
and 
\[
\E\{\|[Am^0]_i\|^p\}
\le \|\<m^0_\perp,m^0_\perp\>^{\f{1}{2}}\|_2^p\, \E\{\|Z\|^{p}\} < c_p\,,
\]
for a constant $c_p$. Therefore,
by Theorem \ref{thm:SLLN}, we get
\[
\lim_{N\to\infty}\f{1}{|C^N_a|}\sum_{i\in C^N_a} \left[\|[Am^0]_i\|^{2\ell}-\E_A\{\|[Am^0]_i\|^{2\ell}\}\right]\asequal 0.
\]
But, $\f{1}{|C^N_a|}\sum_{i\in C^N_a} \E_A\{\|[Am^0]_i\|^{2\ell}\} \le \|\< m^0,m^0\>^{\f{1}{2}}\|_2^{\ell} \, \E_Z\{\|Z\|^{2\ell}\} <\infty$.

\item[$(f)$] Since $t=0$ and $m^0 = m^0_{\perp}$, the result follows from $\lim_{N \to \infty} \<m^0,m^0\> = \Sigma^1 $ and that 
$\Sigma^1 = \sum_{b\in [q]} c_b \hSigma^0_b \succ 0$.

\end{itemize}
\subsubsection{Proof of $\pty{B}_t$:}
Suppose that $\pty{B}_{t-1}$ holds. We prove $\pty{B}_t$.

\begin{itemize}
\item [$(f)$] It is sufficient to consider $r=t$. Write $m^t_{\perp} = m^t - m^t_{\pl}$ and recall that $m^t_{\pl} = \sum_{s=0}^{t-1} m^s \alpha_s$. Hence, for any $\gamma_0 \in \reals^q$, with $\|\gamma_0\| = 1$, we have
\[
\gamma_0^\sT \<m^t_{\perp},m^t_{\perp}\> \gamma_0 = 
\frac{1}{N} \sum_{i=1}^N  \left(\gamma_0^\sT \vm^t_i - \sum_{s=0}^{t-1} \gamma_0^\sT\alpha_s^\sT \vm^s_i \right)
\left(\gamma_0^\sT \vm^t_i - \sum_{s=0}^{t-1} \gamma_0^\sT \alpha_s^\sT \vm^s_i \right)^\sT\,.
\]
Note that the matrix 
\[
\alpha = (\alpha_0, \dotsc, \alpha_{t-1}) =   \Big[ \f{M_{t-1}^\sT M_{t-1}}{N}\Big]^{-1}   \f{M_{t-1}^\sT m^t}{N}\,,
\]
has a finite limit as $N \to \infty$ by the induction hypothesis
$\pty{B}_{t-1}(b)$. Furthermore, $\vm_i^t = g(\vx^t_i,\vy_i,a,t)$, for $i \in C^N_a$. By induction hypothesis $\pty{B}_{t-1}(a)$,
it is sufficient to show that
there exists $\rho>0$ depending on $t$ such that,
\begin{align}
\underset{N\to\infty}{\liminf}\frac{1}{N}\sum_{i=1}^N 
&\left(\gamma_0^\sT g(\vZ+\sum_{r=1}^{t-1}\alpha_{r-1}^{\sT}\vx^{r}_i,\vy_i,a,t) - \sum_{s=0}^{t-1} \gamma_0^\sT\alpha_s^\sT \vm^s_i \right)\cdot \nonumber\\
&\left(\gamma_0^\sT g(\vZ+\sum_{r=1}^{t-1}\alpha_{r-1}^{\sT}\vx^{r}_i,\vy_i,a,t) - \sum_{s=0}^{t-1} \gamma_0^\sT \alpha_s^\sT \vm^s_i \right)^\sT
\ge 2\rho\, ,\label{eq:PositiveRho}
\end{align}
where $\vZ = (\tA m_{\perp}^{t-1})^{\sT}e_i\in\reals^q$ ($e_i$ being the
$i$-the element of the canonical basis). By the strong law of large
numbers for triangular arrays, the above is lower bounded by  
\begin{align*}
\underset{N\to\infty}{\liminf\,}\frac{1}{N}\sum_{i=1}^N\, 
&\E_{\tA}\Big[\gamma_0^\sT g(\vZ+\sum_{r=1}^{t-1}\alpha_{r-1}^{\sT}\vx^{r}_i,\vy_i,a,t) - \sum_{s=0}^{t-1} \gamma_0^\sT\alpha_s^\sT \vm^s_i \Big]\cdot\\
&\E_{\tA}\Big[\gamma_0^\sT g(\vZ+\sum_{r=1}^{t-1}\alpha_{r-1}^{\sT}\vx^{r}_i,\vy_i,a,t) - \sum_{s=0}^{t-1} \gamma_0^\sT \alpha_s^\sT \vm^s_i \Big]^\sT\\
& \ge \underset{N\to\infty}{\liminf\,} \frac{1}{N}\sum_{i=1}^N 
\Var_{\vZ}\Big(\vgamma_0^{\sT}g(\vZ+\sum_{r=1}^{t-1}\alpha_{r-1}^{\sT}\vx^{r}_i,\vy_i,a,t)\Big)\, .
\end{align*}
The variance in the last expression is taken only with respect to
$\tA$ or, equivalently, with respect to $\vZ\sim\normal(0,
\<m^{t-1}_{\perp},m_{\perp}^{t-1}\>)$. Notice that the covariance of
$\vZ$ is lower bounded by $\rho'\,\id_{q\times q}$ for some $\rho'>0$,
by the induction hypothesis $\pty{B}_{t-1}(f)$. It is a
straightforward probability exercise to show that, for any
non-constant continuous
function $G:\reals^q\to\reals$, and any $U>0$ there exists $\eps>0$
such that 
\begin{eqnarray*}
\inf_{\|\va\|_2\le U}\Var_{\vZ}\big(G(\va+\vZ)\big)\ge \eps\, .
\end{eqnarray*}
Using $\pty{B}_{t-1}(e)$, we can choose $U$ large enough to ensure that there exists
at least $N/2$ values of the indices $i\in [N]$ such that
$\|\sum_{r=0}^{t-1}\alpha_{r-1}^{\sT}\vx^{r}_i\|\le U$. Note that $U$ and therefore $\eps$ depend on $t$ but do not depend on $N$. 
The lower bound (\ref{eq:PositiveRho}) follows then by taking $\rho = \eps/4$.


\item[$(a)$] Let $\Om_t \in \reals^{q\times q}$ be given by $\Om_t = \f{1}{N} \left( \sum_{j \in [N]} \f{\partial f^j}{\partial \vx}(\vx^t_j,t)\right)$. Further let $\matB$ be a square block-diagonal matrix of size $tq$ with matrices $\Om_0^\sT,\dotsc, \Om_{t-1}^\sT$ on its diagonal. Define $X_{t-1} = [x^1|x^2|\dotsc|x^t]$. Recalling the definition of $Y_{t-1}$ and $M_{t-1}$ from Section~\ref{subsec:definitions},
\[
Y_{t-1} = X_{t-1} + [0_{N\times q}|M_{t-2}] \matB\,.
\]
\begin{lemma}\label{lem:xt Conditioned}
The following holds
\[
x^{t+1}|_{\sigal{S}_{t}}\deq X_{t-1}(M_{t-1}^\sT M_{t-1})^{-1}M_{t-1}^\sT m^t_\pl+  P_{M_{t-1}}^\perp {\tA} m_{\perp}^t+M_{t-1}\order_{t-1}(1)\,.
\]
\end{lemma}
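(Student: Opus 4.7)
The plan is to condition on $\sigal{S}_t$, which by \eqref{eq:matrixYM} is equivalent to conditioning on $y$, $M_{t-1}$, and the linear observations $AM_{t-1}=Y_{t-1}$. Since $A$ is a symmetric Gaussian matrix, a standard Gaussian-conditioning computation gives
\[
A\,\big|_{\sigal{S}_t} \;\deq\; \E[A\,|\,AM_{t-1}=Y_{t-1}] \;+\; P_{M_{t-1}}^\perp \tilde{A}\, P_{M_{t-1}}^\perp,
\]
where $\tilde{A}$ is an independent copy of $A$, and the conditional mean is the unique symmetric matrix $B$ satisfying $BM_{t-1}=Y_{t-1}$ that is orthogonal (in Frobenius inner product on symmetric matrices) to the constraint kernel. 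Explicitly,
\[
\E[A\,|\,AM=Y] = Y(M^\sT M)^{-1}M^\sT + M(M^\sT M)^{-1}Y^\sT - M(M^\sT M)^{-1}(M^\sT Y)(M^\sT M)^{-1}M^\sT,
\]
which is well defined and symmetric because $M^\sT Y = M^\sT AM$ is symmetric.

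Substituting $m^t = m^t_\pl + m^t_\perp$ with $m^t_\pl = M_{t-1}\alpha$ into $x^{t+1} = Am^t - \Ons_t m^{t-1}$, the perpendicular part is immediate: $P_{M_{t-1}}^\perp \tilde{A}\, P_{M_{t-1}}^\perp m^t = P_{M_{t-1}}^\perp \tilde{A}\, m^t_\perp$, producing the middle term of the claim. For the conditional-mean contribution, $(M_{t-1}^\sT M_{t-1})^{-1}M_{t-1}^\sT m^t = \alpha$, so
\[
\E[A\,|\,AM=Y]\,m^t \;=\; Y\alpha \;+\; M_{t-1}(M_{t-1}^\sT M_{t-1})^{-1}\bigl[Y^\sT m^t - (M_{t-1}^\sT Y)\alpha\bigr].
\]
Writing $Y_{t-1} = X_{t-1} + [0\,|\,M_{t-2}]\matB$ splits the first piece as
\[
Y\alpha = X_{t-1}(M_{t-1}^\sT M_{t-1})^{-1}M_{t-1}^\sT m^t_\pl + [0\,|\,M_{t-2}]\matB\alpha,
\]
which supplies the main $X_{t-1}$-projection of the claim plus a contribution $M_{t-1}E_1$ living in the column span of $M_{t-1}$.

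What remains is to show that the three $M_{t-1}$-spanning residuals --- namely $M_{t-1}E_1$, the symmetrization correction $M_{t-1}(M_{t-1}^\sT M_{t-1})^{-1}[Y^\sT m^t - (M_{t-1}^\sT Y)\alpha]$, and the Onsager subtraction $-\Ons_t m^{t-1} = -m^{t-1}\Om_t^\sT$ --- collectively equal $M_{t-1}\order_{t-1}(1)$. This is the essential Onsager cancellation. The mechanism is the Stein-type identity supplied by the induction hypothesis $\pty{B}_{t-1}(d)$: applied with $\varphi = f(\,\cdot\,;t)$, and noting that $\<\nabla f(x^t;t)\> = \Om_t$, it yields
\[
\lim_{N\to\infty}\<x^k, m^t\> \;=\; \lim_{N\to\infty}\<x^k, x^t\>\,\Om_t^\sT \qquad (1\le k\le t).
\]
Unpacking $(1/N)Y^\sT m^t$ column-block by column-block (the $k$-th block equals $\<x^k,m^t\> + \Om_{k-1}\<m^{k-2},m^t\>$ for $k\ge 2$ and $\<x^1,m^t\>$ for $k=1$), combining with $(1/N)(M_{t-1}^\sT Y)\alpha$ and with the $\matB$-blocks of $E_1$, and evaluating the limits via $\pty{B}_{t-1}(b)$--$(c)$, the $k<t$ blocks cancel against one another, while the $k=t$ block is exactly offset by the Onsager subtraction $-m^{t-1}\Om_t^\sT$. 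The residual coefficients on $M_{t-1}$ therefore tend to zero almost surely.

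The main obstacle is precisely this bookkeeping: matching block-by-block the Stein limits with the Onsager coefficient $\Om_t$ and the $\matB$-entries of $Y$, and verifying that the symmetrization correction produces no surviving $M_{t-1}$ contribution in the limit (again because $(1/N)M_{t-1}^\sT Y$ decomposes, by the same Stein identity, into derivative blocks matching the corresponding blocks of $\alpha$). Once these cancellations are in hand, the three terms of \eqref{eq:elephantSym-a} are isolated, completing the proof of Lemma~\ref{lem:xt Conditioned}.
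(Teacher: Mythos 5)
Your proposal is correct and follows essentially the same route as the paper: the same Gaussian conditioning formula for $A$ given $AM_{t-1}=Y_{t-1}$, the same split $m^t=m^t_\pl+m^t_\perp$, and the same cancellation of the $M_{t-1}$-spanning residuals against the Onsager term via the induction hypotheses $\pty{B}_{t-1}(c)$ and $\pty{B}_{t-1}(d)$. The only cosmetic difference is that the paper first simplifies $Y_{t-1}^\sT m^t_\perp$ to $X_{t-1}^\sT m^t_\perp$ (using $M_{t-2}^\sT m^t_\perp=0$) before doing the block-by-block coefficient computation, whereas you carry the $\matB$-blocks through directly.
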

\begin{proof}
Lemma~$10$ in~\cite{BM-MPCS-2011} implies that $A|_{\sigal{S}_t} \deq E(A|_{\sigal{S}_t}) + \mathcal{P}_t(\tA)$, where $\tA \deq A$ is a random matrix independent of $\sigal{S}_t$ and $\mathcal{P}_t$ is the orthogonal projector onto subspace $V_t = \{A|AM_{t-1} = 0, A= A^\sT\}$. Following the same argument as in~\cite{BM-MPCS-2011}, we have
\begin{eqnarray*}
E(A|_{\sigal{S}_t}) &=& Y_{t-1}(M_{t-1}^\sT M_{t-1})^{-1}M_{t-1}^\sT + M_{t-1}(M_{t-1}^\sT M_{t-1})^{-1} Y_{t-1}^\sT\\
 &&- M_{t-1}(M_{t-1}^\sT M_{t-1})^{-1}M_{t-1}^\sT Y_{t-1} (M_{t-1}^\sT M_{t-1})^{-1}M_{t-1}^\sT\,.\\
 \mathcal{P}_t(\tA) &=&  P_{M_{t-1}}^\perp \tA P_{M_{t-1}}^\perp\,.
\end{eqnarray*}
Using $M_{t-1}^\sT m^t_{\perp} = 0$ and $Y_{t-1} = AM_{t-1}$, it is immediate to see that
\[
A|_{\sigal{S}_t} m^t \deq Y_{t-1}(M_{t-1}^\sT M_{t-1})^{-1}M_{t-1}^\sT m^t_{\pl}
+ M_{t-1}(M_{t-1}^\sT M_{t-1})^{-1} Y_{t-1}^\sT m^t_{\perp} + P_{M_{t-1}}^\perp \tA m^t_{\perp}\,.
\]
Moreover, $Y_{t-1}^\sT m^t_{\perp} = X_{t-1}^\sT m^t_{\perp}$ because $M_{t-2}^\sT m^t_{\perp} = 0$. Recalling $m^t_{\pl} = M_{t-1} \alpha$ we need to show
\begin{eqnarray}\label{eq:error(x)}
[0|M_{t-2}] \matB \alpha + M_{t-1} (M_{t-1}^\sT M_{t-1})^{-1} X_{t-1}^\sT m^t_{\perp} - m^{t-1} \Om_t^\sT = M_{t-1} \order_{t-1}(1)\,.
\end{eqnarray}
Note that we used the fact $\Ons_t m^{t-1} = m^{t-1} \Om_t^\sT$ which follows from our convention $\Space \equiv \reals^{N\times q}$.

Here is our strategy to prove \eqref{eq:error(x)}. The left hand side is a linear combination of $m^0,\ldots,m^{t-1}$. For any $\ell = 1,\ldots,t$ we will prove that the coefficient of
$m^{\ell-1}$ converges to $0$.  Note that the coefficients are matrices of size $q$. The coefficient of $m^{\ell-1}$  in the left hand side is equal to
\[\left[(M_{t-1}^\sT M_{t-1})^{-1}X_{t-1}^\sT m_\perp^t\right]_{(\ell)} - \Om_\ell^\sT (-\alpha_\ell)^{\indicator_{\ell\neq t}}=
\sum_{r=1}^{t} \left[(\f{M_{t-1}^\sT M_{t-1}}{N})^{-1}\right]_{(\ell),(r)} \< x^{r},m^t-\sum_{s=0}^{t-1} m^s \alpha_s\> - \Om_\ell^\sT (-\alpha_\ell)^{\indicator_{\ell\neq t}}\,.
\]
To simplify the notation denote the matrix $M_{t-1}^\sT M_{t-1}/N$ by $G$.
Therefore,
\[
 \lim_{N\to\infty}\textrm{Coefficient of } m^{\ell-1}=
\lim_{N\to\infty}\left\{\sum_{r=1}^{t} (G^{-1})_{(\ell),(r)} \< x^{r},m^t-\sum_{s=0}^{t-1} m^s \alpha_s\> - \Om_\ell^\sT (-\alpha_\ell)^{\indicator_{\ell\neq t}}\right\}.
\]
But using the induction hypothesis $\pty{B}_{t-1}(d)$ for $\varphi=f(\cdot;1),\dotsc, f(\cdot;t)$,
the term $\< x^{r},m^t-\sum_{s=0}^{t-1}m^s \alpha_s\> $ is almost surely equal to the
limit of
$\< x^{r},x^t\> \Om_t^\sT - \sum_{s=0}^{t-1} \< x^{r},x^s\> \Om_s^\sT \alpha_s$. This can be modified, using the induction hypothesis $\pty{B}_{t-1}(c)$, to
$\< m^{r-1},m^{t-1}\>\Om_t^\sT - \sum_{s=0}^{t-1}\< m^{r-1},m^{s-1}\> \Om_s^\sT \alpha_s$ almost surely, which can be written as $G_{(r),(t)}\Om_t^\sT - \sum_{s=0}^{t-1} G_{(r),(s)}\Om_s^\sT \alpha_s$.
Hence,
\begin{align*}
\lim_{N\to\infty}\textrm{Coefficient of } m^{\ell-1} &\asequal  \lim_{N\to\infty}\left\{\sum_{r=1}^{t} (G^{-1})_{(\ell),(r)} [G_{(r),(t)}\Om_t^\sT - \sum_{s=0}^{t-1}G_{(r),(s)}\Om_s^\sT \alpha_s]-\Om_\ell^\sT (-\alpha_\ell)^{\indicator_{\ell\neq t}}\right\}\\
&\asequal  \lim_{N\to\infty}\left\{\Om_t^\sT \indicator_{t=\ell}   - \sum_{s=0}^{t-1}\Om_s^\sT \alpha_s \indicator_{\ell=s} - \Om_\ell^\sT (-\alpha_\ell)^{\indicator_{\ell\neq t}}\right\}\\
&\asequal 0\,.
\end{align*}

Notice that in the above equalities we used the fact that $G$ has, almost surely, a non-singular limit as $N \to \infty$ which 
was discussed in part $(f)$.
\end{proof}

The proof of Eq.~\eqref{eq:elephantSym-a} follows immediately since 
the last lemma yields
\[
x^{t+1}|_{\sigal{S}_t} \deq \sum_{i=0}^{t-1} x^{i+1} \alpha_i + {\tA}m_\perp^{t}- M_{t-1}(M_{t-1}^\sT M_{t-1})^{-1}M_{t-1}^\sT \tA m_\perp^t+M_{t-1}\order_{t-1}(1)\,.
\]
Note that, using Lemma \ref{lem:prop-Gaussian-matrix}(d), as $N\to\infty$,
\begin{align*}
M_{t-1}(M_{t-1}^\sT M_{t-1})^{-1}M_{t-1}^\sT \tA m_\perp^t&\deq\tM_{t-1}\order_{t-1}(1)\,,
\end{align*}
which finishes the proof since $\tM_{t-1}\order_{t-1}(1)+M_{t-1}\order_{t-1}(1)=\tM_{t-1}\order_{t-1}(1)$.


\item[$(c)$]
For $r,s<t$ we can use induction hypothesis. For $r=t, s<t$,
\begin{align}
\< x_{C^N_a}^{t+1},x_{C^N_a}^{s+1}\>|_{\sigal{S}_t}&\deq \sum_{i=0}^{t-1}\alpha_i^\sT \< x_{C^N_a}^{i+1},x_{C^N_a}^{s+1}\> +
\< [P_{M_{t-1}}^{\perp} \tA m_\perp^{t}]_{C^N_a},x_{C^N_a}^{s+1}\> + \sum_{i=0}^{t-1} \order_1(1) \< m_{C^N_a}^i,x_{C^N_a}^{s+1}\>.\no
\end{align}
Now, by induction hypothesis $\pty{B}_{t-1}(d)$, for $\varphi(\vv,\vu)=g(\vv,\vu,a,i)$, each term $\< m_{C^N_a}^i,x_{C^N_a}^{s+1}\>$ has a finite limit. Thus,
\[
\lim_{N\to\infty}\sum_{i=0}^{t-1}\order_1(1)\< m_{C^N_a}^i,x_{C^N_a}^{s+1}\>\asequal0.
\]
We can use Lemma \ref{lem:prop-Gaussian-matrix} (b)-(d) for $\< [P_{M_{t-1}}^{\perp} \tA m_\perp^{t}]_{C^N_a},x_{C^N_a}^{s+1}\>$
to obtain $\< [P_{M_{t-1}}^{\perp} \tA m_\perp^{t}]_{C^N_a},x_{C^N_a}^{s+1}\> \to 0$, almost surely.
Finally, using induction hypothesis $\pty{B}_{s}(c)$ or $\pty{B}_{i}(c)$ for each term of the form $\< x_{C^N_a}^{i},x_{C^N_a}^{s+1}\>$ 
\begin{align}
\lim_{N\to\infty}\< x^{t+1}_{C^N_a},x^{s+1}_{C^N_a}\>&\asequal \lim_{N\to\infty}\sum_{i=0}^{t-1}\alpha_i^\sT\< m^{i},m^s\>\no\\
&\asequal\lim_{N\to\infty}\< m_\pl^t,m^s\>\asequal\lim_{N\to\infty}\< m^t,m^s\>\no\, ,
\end{align}
where the last line uses the definition of $\alpha_i$ and $m_\perp^t\perp m^s$.

For the case of $r=s=t$, we have
\begin{align}
\< x^{t+1}_{C^N_a},x^{t+1}_{C^N_a}\>|_{\sigal{S}_{t}}&\deq \sum_{i,j=0}^{t-1}\alpha_i^\sT\< x_{C^N_a}^{i+1},x_{C^N_a}^{j+1}\> \alpha_j+ 
\< [P_{M_{t-1}}^{\perp} \tA m_\perp^{t}]_{C^N_a},[P_{M_{t-1}}^{\perp} \tA m_\perp^{t}]_{C^N_a}\> + \order_1(1).\no
\end{align}
Note that the contribution of all products of the form$\< [P_{M_{t-1}}^{\perp} \tA m_\perp^{t}]_{C^N_a},x^{i+1}_{C^N_a}\>$ almost surely tend to $0$.
Now, using induction hypothesis $\pty{B}_i(c)$ and Lemma \ref{lem:prop-Gaussian-matrix} (c), we obtain
\begin{align}
\lim_{N\to\infty}\< x_{C^N_a}^{t+1},x_{C^N_a}^{t+1}\>|_{\sigal{S}_{t}}&\asequal \lim_{N\to\infty}\sum_{i,j=0}^{t-1}\alpha_i^\sT\< m^{i},m^j\> \alpha_j + \lim_{N\to\infty}\<m^t_\perp,m^t_\perp\>\no\\
&\asequal \lim_{N\to\infty}\< m_\pl^{t},m_\pl^t\> + \lim_{N\to\infty}\< m_\perp^{t},m_\perp^{t}\>\no\\
&\asequal \lim_{N\to\infty}\< m^{t},m^t\>\no.
\end{align}
%

\item[$(e)$] This part follows by a very similar argument to the one in the proof of Lemma~$1$ (Step $\pty{B}_t(e)$) in~\cite{BM-MPCS-2011}.

\item[$(b)$] Using part $(a)$ we can write
\[
\phi(\vx_i^1,\ldots,\vx_i^{t+1},\vy_i)|_{\sigal{S}_{t,t}}\deq\phi\left(\vx_i^1,\ldots,\vx_i^{t},\left[\sum_{r=0}^{t-1}x^{r+1} \alpha_r + {\tA}m_\perp^{t}+\tM_{t-1}\order_{t-1}(1)\right]_i,\vy_i\right).
\]
We show that we can drop the error term $\tM_{t-1}\order_{t-1}(1)$. Indeed, defining
\begin{eqnarray*}
a_{i}&=&\left(\vx_i^1,\ldots,\vx_i^{t},\left[\sum_{r=0}^{t-1}x^{r+1}\alpha_r + {\tA}m_\perp^{t}+\tM_{t-1}\order_{t-1}(1)\right]_i,\vy_i\right)\,,\\
b_{i}&=&\left(\vx_i^1,\ldots,\vx_i^{t},\left[\sum_{r=0}^{t-1} x^{r+1} \alpha_r + {\tA}m_\perp^{t}\right]_i,\vy_i\right)\,,
\end{eqnarray*}
by the pseudo-Lipschitz assumption
\[
|\phi(a_{i})-\phi(b_{i})|\leq L\,(1+\|a_{i}\|^{k-1} + \|b_{i}\|^{k-1})\left(\sum_{r=0}^{t-1}\|\tm_i^r\|\right)\, o(1).
\]
Therefore, using Cauchy-Schwartz inequality twice, we have
\begin{align}
&\f{1}{|C^N_a|}\Big|\sum_{i\in C^N_a}\phi(a_{i})-\sum_{i\in C^N_a}\phi(b_{i})\Big|\no\\
&\leq L'\bigg\{1 + \frac{1}{|C^N_a|} \sum_{i\in C^N_a}{\|a_{i}\|^{2k-2}} + \frac{1}{|C^N_a|} \sum_{i\in C^N_a}{\|b_{i}\|^{2k-2}}\bigg\}^{\f{1}{2}}  \bigg\{ \f{1}{|C^N_a|}\sum_{r=0}^{t-1}\|\tm^r\|^2 \bigg\}^{\f{1}{2}}  t^{\f{1}{2}} o(1)\,.
\label{eq:CS}
\end{align}
Also note that
\[
\f{1}{|C^N_a|}\sum_{i \in C^N_a} \|a_{i}\|^{2\ell}\leq (t+1)^{\ell}
\big\{\sum_{r=0}^{t}\f{1}{|C^N_a|}\sum_{i\in C^N_a} \|\vx_i^{r+1}\|^{2\ell}+\f{1}{|C^N_a|}\sum_{i\in C^N_a} \|\vy_i\|^{2\ell}\big\}\,,
\]
which is finite almost surely (for $\ell = k-1$) using $\pty{B}_{r}(e)$ for $ r\in [t]$
and the assumption on (the moment of) $y$. The term $|C^N_a|^{-1}\sum_{i \in C^N_a} \|b_{i}\|^{2\ell}$ is bounded almost surely since
\begin{align*}
\f{1}{|C^N_a|}\sum_{i \in C^N_a}\|b_{i}\|^{2\ell}&\leq
\f{C}{|C^N_a|}\sum_{i\in C^N_a}\|a_{i}\|^{2\ell}+C\sum_{r=0}^{t-1}
\f{1}{|C^N_a|}\sum_{i\in C^N_a}\|\tilde{\vm}_i^r\|^{2\ell} o(1)\\
& \leq
\f{C}{|C^N_a|}\sum_{i\in C^N_a} \|a_{i}\|^{2\ell}+C'\sum_{r=0}^{t-1}
\f{1}{|C^N_a|}\sum_{i\in C^N_a} \|{\vm}_i^r\|^{2\ell} o(1)\, ,
\end{align*}
where the last inequality follows from the fact that $[M_{t-1}^\sT M_{t-1}/N]$
has almost surely a non-singular limit as $N\to\infty$, as discussed in point $(f)$
above. Finally, for $r\le t-1$, each term
 $(1/|C^N_a|)\sum_{i\in C^N_a} \|{\vm}_i^r\|^{2\ell}$ can be easily proved to be bounded
using the induction hypothesis $\pty{B}_{t-1}(e)$.

Hence for any fixed $t$, \eqref{eq:CS} vanishes almost surely when $N$ goes to $\infty$.

Now given, $\vx^1,\ldots,\vx^t$,  consider the random variables
\[
\tilde{X}_{i}= \phi\left(\vx_i^1,\ldots,\vx_i^{t},\sum_{r=0}^{t-1} \alpha_r^\sT \vx^{r+1}_i + ({\tA}m_\perp^{t})_i,\vy_i\right)
\]
and $X_{i}\equiv\tilde{X}_{i}-\E_{{\tA}}\{\tilde{X}_{i}\}$.
Proceeding as in $\pty{B}_0$, and using the pseudo-Lipschitz property of
$\phi$, it is easy to check the conditions of Theorem \ref{thm:SLLN}.
We therefore get
\begin{multline}\label{eq:Ztb-halfway1}
\lim_{N\to\infty}\f{1}{|C^N_a|}\sum_{i\in C^N_a}\bigg[\phi\Big(\vx_i^1,\ldots,\vx_i^{t},\big[\sum_{r=0}^{t-1} x^{r+1}\alpha_r + {\tA}m_\perp^{t}\big]_i,\vy_i\Big)\\
-\E_{{\tA}}\Big\{\phi\Big(\vx_i^1,\ldots,\vx_i^{t},\big[\sum_{r=0}^{t-1}x^{r+1} \alpha_r + {\tA}m_\perp^{t}\big]_i,\vy_i\Big)\Big\}\bigg]\asequal 0.
\end{multline}
Note that $[{\tA}m_\perp^{t}]_i$ is a gaussian random vector with covariance $\<m_\perp^t, m_\perp^{t}\>$.
Further  $\<m_\perp^t, m_\perp^{t}\>$ converges to
a finite limit $\Gamma_t^2$ almost surely as $N\to\infty$.
Indeed  $\<m_\perp^{t}, m_\perp^{t}\> =  \<m^{t}, m^{t}\> - \<m_\pl^{t}, m_\pl^{t}\>$.
By $\pty{B}_t(c)$, $\<m^t,m^t\>$ converges to a finite limit.
Further, $\<m^t_{\pl},m^t_{\pl}\> =\sum_{r,s=0}^{t-1}\alpha_r^\sT\<x^r,x^s\>\alpha_s$
also converges since the products $\<x^r,x^s\>$ do and the
coefficients $\alpha_r$, $r\le t-1$ as discussed in $\pty{B}_t(f)$.

Hence we can use induction hypothesis $\pty{B}_{t-1}(b)$ for
\[
\widehat{\phi}(\vx_i^1,\ldots,\vx_i^{t},\vy_i)=\E_{Z}\Big\{\phi\Big(\vx_i^1,\ldots,\vx_i^{t},\sum_{r=0}^{t-1} \alpha_r^\sT \vx_i^{r+1}  +  \<m^t_\perp, m^t_\perp\>^{\f{1}{2}} Z,\vy_i\Big)\Big\}\,,
\]
with $Z \sim \normal(0,\id_{q\times q})$ independent of $\vx^{r+1}_i$, $r\le t-1$, to show
\begin{multline}\label{eq:Ztb-halfway2}
\lim_{N\to\infty}\f{1}{|C^N_a|}\sum_{i\in C^N_a} \E_{{\tA}}\left\{\phi\left(\vx_i^1,\ldots,\vx_i^{t},\left[\sum_{r=0}^{t-1}\alpha_r^\sT\vx_i^{r+1} + {\tA}m_\perp^{t}\right]_i,\vy_i\right)\right\}\\
\asequal\E\,\E_{Z}\Big\{\phi\Big(Z_a^1,\ldots Z_a^{t},\sum_{r=0}^{t-1}{\alpha_r^\sT} Z_a^{r+1} + \Gamma_t\, Z,Y_a\Big)\Big\}\, .
\end{multline}

Note that $\sum_{r=0}^{t-1}{\alpha_r^\sT} Z_a^{r+1} + \Gamma_t\, Z$ is a gaussian vector.
All that we need, is to show that the covariance matrix of this gaussian vector is $\Sigma^{t+1}$. But using a combination of \eqref{eq:Ztb-halfway1} and \eqref{eq:Ztb-halfway2}
for the pseudo-Lipschitz functions $\phi(\vv_1,\ldots,\vv_{t+1},\vy_i)= \vv_{t+1}(\ell) \vv_{t+1}(k)$, for all $\ell, k \in [q]$,
\begin{align}
\lim_{N\to\infty}\< x^{t+1}_{C^N_a},x^{t+1}_{C^N_a}\> \asequal\E\Big\{\Big(\sum_{r=0}^{t-1}{\alpha_r^\sT} Z_a^{r+1} + \Gamma_t\, Z\Big) \Big(\sum_{r=0}^{t-1}{\alpha_r^\sT} Z_a^{r+1} + \Gamma_t\, Z\Big) ^\sT\Big\}.\label{eq:var-temp}
\end{align}
On the other hand as proved in part $(c)$, 
\[
\lim_{N\to\infty}\< x^{t+1}_{C^N_a},x^{t+1}_{C^N_a}\>\asequal \lim_{N\to\infty}\< m^{t},m^{t}\> = \lim_{N\to\infty}\< f(x^t,t),f(x^t,t)\>\,.
\]
Hence,
\begin{align*}
\lim_{N\to\infty}\< x^{t+1}_{C^N_a},x^{t+1}_{C^N_a}\> &\asequal \lim_{N\to \infty} \f{1}{N} \sum_{i=1}^N f^i(\vx^t_i,t)[f^i(\vx^t_i,t)]^\sT\\
&= \sum_{a\in [q]} c_a \f{1}{|C^N_a|} \sum_{i \in C^N_a}  g(\vx^t_i,\vy_i,a,t) g(\vx^t_i,\vy_i,a,t)^\sT \,.
\end{align*}
By induction hypothesis $\pty{B}_{t-1}(b)$ for the pseudo-Lipschitz functions 
\[
\phi(\vv_1,\ldots,\vv_{t},\vy_i)=[g(\vv_{t},\vy_{i},a,t)]_{\ell} [g(\vv_{t},\vy_{i},a,t)]_{k}\,,
\]
for all $\ell,k\in [q]$, we get 
\[
 \f{1}{|C^N_a|} \sum_{i \in C^N_a} g(\vx^t_i,\vy_i,a,t) g(\vx^t_i,\vy_i,a,t)^\sT \asequal
\E\Big\{g(Z_a^t,Y_a,a,t) g(Z_a^t,Y_a,a,t)^\sT\Big\}  = \hSigma_a^t\,.
\]
Consequently,
\[
\lim_{N\to\infty}\< x^{t+1}_{C^N_a},x^{t+1}_{C^N_a}\>\asequal  \sum_{a\in [q]} c_a \hSigma_a^t = \Sigma^{t+1}\,.
\]
which proves the claim.

\item[$(d)$] In a very similar manner to the proof of $\pty{B}_0(d)$, using part $(b)$ for
the pseudo-Lipschitz function $\phi:{\cal V}_{q,t+2}\to\reals$ given by
$\phi(\vx_i^1,\ldots,\vx_i^{t+1},\vy_i)= \vx_i^{r+1}(l)[\varphi(\vx_i^{s+1},\vy_i)]_k$, for all $l,k \in [q]$, we can obtain
\[
\lim_{N\to\infty}\< x^{r+1}_{C^N_a},\varphi(x^{s+1}_{C^N_a},y_{C^N_a})\>\asequal \E(Z_a^{r+1} [\varphi( Z_a^{s+1},Y_a)]^ \sT )\,,
\]
for gaussian vectors $Z_a^{r+1}~\sim \normal(0,\Sigma^{r+1})$, $Z_a^{s+1}~\sim \normal(0,\Sigma^{s+1})$. Using Lemma \ref{lem:stein}, we have almost surely,
\[
\E(Z_a^{r+1} [\varphi( Z_a^{s+1},Y_a)]^\sT ) = \cov(Z_a^{r+1},Z_a^{s+1})\,\,\E([\f{\partial \varphi^a}{\partial \vz}(Z_a^{s+1},Y_a)]^\sT)\,.
\]
By another application of part $(b)$ for $\phi(\vx_i^1,\ldots,\vx_i^{t+1},\vy_i)=\vx_i^{r+1}(l)\vx_i^{s+1}(k)$ for all $l,k\in [q]$, 
\[
 \lim_{N\to\infty}\< x^{r+1}_{C^N_a},x^{s+1}_{C^N_a}\> = \cov(Z_a^{r+1},Z_a^{s+1})\,.
\]
Similar to $\pty{B}_0(d)$ we also have $\lim_{N\to\infty}\< \nphi^a(x^{s+1}_{C^N_a},y_{C^N_a})\> = \E([\f{\partial\varphi^a}{\partial \vz}(Z_a^{s+1},Y_a)]^\sT)$, almost surely, as the empirical distribution of $\{(\vx^{s+1}_i,\vy_i)\}_{i \in C^N_a}$ converges weakly to the distribution of $(Z_a^{s+1},Y_a)$. This finishes the proof of Eq.~\eqref{eq:elephantSym-d1}.

Eq.~\eqref{eq:elephantSym-d} follows from Eq.~\eqref{eq:elephantSym-d1} exactly by the same argument as in $\pty{B}_0(d)$.

\end{itemize}
%
%
\appendix
\section{Reference probability results}\label{app:ProbFacts}
In this appendix, we summarize a few probability facts that are repeatedly used in the proof of Lemma~\ref{lem:elephantSym}.
We start by the following strong law of large numbers (SLLN) for triangular arrays of independent but not identically distributed random variables. The form stated below follows immediately from~\cite[Theorem 2.1]{SLLN2}.
\begin{theorem}[SLLN, \cite{SLLN2}]\label{thm:SLLN}
Let $\{X_{n,i}: 1\le i\le n, n\ge 1\}$ be a triangular array of random
variables with $(X_{n,1},\dots,X_{n,n})$ mutually independent
with mean equal to zero for each $n$ and $n^{-1}\sum_{i=1}^n\E|X_{n,i}|^{2+\kappa}\le cn^{\kappa/2}$ for some $0<\kappa<1$, $c<\infty$.
Then $\frac{1}{n}\sum_{i=1}^nX_{i,n}\to 0$ almost surely for $n\to\infty$.
\end{theorem}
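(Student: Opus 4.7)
The statement is a strong law of large numbers for rowwise independent triangular arrays under a $(2+\kappa)$-th moment growth assumption. My plan is to combine truncation, Bernstein's exponential inequality, and two applications of the first Borel--Cantelli lemma. I first observe that a direct Markov estimate at order $2+\kappa$, combined with Rosenthal's moment inequality
\[
\E|S_n|^{2+\kappa} \le C_\kappa\Bigl[\bigl(\textstyle\sum_i \E X_{n,i}^2\bigr)^{(2+\kappa)/2} + \textstyle\sum_i \E|X_{n,i}|^{2+\kappa}\Bigr],
\]
where $S_n=\sum_i X_{n,i}$ and $\sum_i \E X_{n,i}^2 = O(n^{1+\kappa/(2+\kappa)})$ follows from Lyapunov's inequality and Jensen applied to the hypothesis, yields only $P(|S_n|/n>\eps)=O(1/n)$. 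This is not summable in $n$, so the naive first Borel--Cantelli bound does not suffice; an exponential concentration argument is required.

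Accordingly, I would truncate at level $\alpha_n=n^{\beta}$ with $\beta$ chosen strictly between $(2+\kappa/2)/(2+\kappa)$ and $1$ (this window is non-empty because $\kappa>0$), setting $Y_{n,i}=X_{n,i}\indicator_{\{|X_{n,i}|\le\alpha_n\}}$ and $Z_{n,i}=X_{n,i}-Y_{n,i}$. A union bound combined with Markov at order $2+\kappa$ gives
\[
\textstyle\sum_n P\bigl(\exists\, i:\, Z_{n,i}\ne 0\bigr) \le \sum_n \alpha_n^{-(2+\kappa)}\sum_i \E|X_{n,i}|^{2+\kappa} \le c\sum_n n^{1+\kappa/2-\beta(2+\kappa)} < \infty,
\]
so Borel--Cantelli implies that eventually $Z_{n,i}\equiv 0$, i.e., $S_n$ coincides with the truncated sum. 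The deterministic centering satisfies $|\sum_i \E Y_{n,i}|=|\sum_i \E Z_{n,i}| \le \alpha_n^{-(1+\kappa)}\sum_i \E|X_{n,i}|^{2+\kappa}=o(n)$. It thus remains to control the centered truncated sum $T_n=\sum_i(Y_{n,i}-\E Y_{n,i})$.

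Since $|Y_{n,i}-\E Y_{n,i}|\le 2\alpha_n$ and $\sum_i \Var(Y_{n,i})\le \sum_i \E X_{n,i}^2 \le Cn^{1+\kappa/(2+\kappa)}$, Bernstein's inequality gives
\[
P\bigl(|T_n|>\eps n\bigr) \le 2\exp\!\Bigl(-\tfrac{c\,\eps^2 n^2}{n^{1+\kappa/(2+\kappa)}+\eps\,\alpha_n n}\Bigr) \le \exp\bigl(-c'\eps^2\, n^{1-\beta}\bigr),
\]
using $\beta>\kappa/(2+\kappa)$ to identify the dominant term in the denominator. Since $\beta<1$ the right-hand side is summable in $n$, and a second Borel--Cantelli application yields $T_n/n\to 0$ almost surely, hence $S_n/n\to 0$ a.s. The main obstacle will be the delicate exponent balance: $\beta$ must be large enough for the tail sum to converge, yet strictly less than $1$ to leave a positive exponent in Bernstein. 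The specific growth rate $n^{\kappa/2}$ in the hypothesis is precisely what opens a non-empty admissible window for $\beta$, and checking this compatibility is the technical heart of the argument.
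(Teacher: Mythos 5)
Your argument is correct, and it is necessarily a different route from the paper's, because the paper offers no proof at all: Theorem~\ref{thm:SLLN} is imported verbatim from \cite{SLLN2} (``follows immediately from Theorem 2.1''). What you supply is a self-contained truncation--Bernstein--Borel--Cantelli proof, and the exponent bookkeeping checks out at every step: the window $(2+\kappa/2)/(2+\kappa)<\beta<1$ is non-empty for $\kappa>0$; the lower bound on $\beta$ is exactly what makes $\sum_n n^{1+\kappa/2-\beta(2+\kappa)}$ converge; the variance bound $\sum_i\E X_{n,i}^2=O(n^{1+\kappa/(2+\kappa)})$ follows from Lyapunov plus H\"older as you indicate; the centering term is $o(n)$ since $\beta>\kappa/(2+2\kappa)$ automatically; and $\beta>\kappa/(2+\kappa)$ holds a fortiori, so the bounded-increment term dominates the Bernstein denominator, leaving a summable tail because $\beta<1$. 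Your opening observation that Rosenthal--Markov only gives $P(|S_n|>\eps n)=O(1/n)$ correctly diagnoses why exponential concentration is needed. Two cosmetic points: when the linear term dominates in Bernstein the exponent is of order $\eps\, n^{1-\beta}$ rather than $\eps^2 n^{1-\beta}$ (harmless, since either is summable for fixed $\eps$, and one then intersects over $\eps=1/k$); and the preliminary Rosenthal discussion is motivation only, so it could be omitted without weakening the proof. Compared with simply citing \cite{SLLN2}, your version makes the paper self-contained at the cost of about a page, and it in fact proves the statement for all $\kappa>0$, not just $\kappa<1$.
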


Next, we present a standard property of Gaussian matrices without proof. This is a generalization of~\cite[Lemma 2]{BM-MPCS-2011}.
\begin{lemma}\label{lem:prop-Gaussian-matrix} For any deterministic $u \in \Space$, $v \in \nSpace$ and a gaussian matrix ${\tA} \in \reals^{n\times N}$  with i.i.d. entries $\normal(0,1/N)$, we have
\begin{itemize}
\item[(a)] $[\tA u]_i \deq \<u,u\> ^{\frac{1}{2}} \vz$, where $\vz \sim \normal(0,\id_{q\times q})$.

\item [(b)] $\<\tilde{A}u,v\> \deq \<u,u\>^{\frac{1}{2}} \<v,z\>$, where $z\in \nSpace$, $\vz_i \sim \normal(0,\id_{q\times q})$.

\item[(c)] $\lim_{n\to\infty} \<{\tA}u,{\tA}u\> = \<u,u\>$ almost surely.

\item[(d)] Consider, for $d\le n$, a $d$-dimensional subspace $W$ of
$\reals^n$, an orthogonal basis $w_1,\ldots,w_d$ of $W$ with $\|w_i\|^2= n$ for $i=1,\ldots,d$, and the orthogonal projection $P_W$ onto $W$. Then for $D = [w_1|\dotsc|w_d]$, and $u \in \Space$ with $\<u,u\> = \id_{q\times q}$, we have $P_W \tilde{A} u\deq D x$ where $x \in {\cal V}_{q,d}$ satisfies: $\lim_{n\to\infty}\|x\|\asequal 0$.
(the limit being taken with $d$ fixed).
\end{itemize}
\end{lemma}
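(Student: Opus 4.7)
The plan is to handle each of the four parts by a direct Gaussian computation, exploiting the fact that any linear image of $\tA$ is jointly Gaussian and is therefore characterized entirely by its first two moments, together with a law of large numbers for the second-moment claim and a variance estimate for the projection claim. Because the rows of $\tA$ are i.i.d.\ Gaussian vectors with entries $\normal(0,1/N)$, expressions of the form $\tA u$, $\<\tA u,v\>$, and $w^{\sT}\tA u$ all decompose into sums of independent Gaussians whose covariances reduce to entries of $\<u,u\>$ and inner products of the deterministic vectors involved.

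For part~(a), I would expand $[\tA u]_i=\sum_{j=1}^N \tA_{ij}\vu_j\in\reals^q$ and read off a centered Gaussian with covariance $(1/N)\sum_j \vu_j\vu_j^{\sT}=\<u,u\>$; this is exactly the distribution of $\<u,u\>^{1/2}\vz$ with $\vz\sim\normal(0,\id_{q\times q})$, and distinct rows are independent because distinct rows of $\tA$ are. For part~(b), I would plug the representation of $[\tA u]_i$ from~(a) into $\<\tA u,v\>=(1/n)\sum_i [\tA u]_i\vv_i^{\sT}$, obtaining a $q\times q$ matrix of the form $\<u,u\>^{1/2}\cdot (1/n)\sum_i \vz_i\vv_i^{\sT}$, which matches the claimed distribution in terms of an i.i.d.\ Gaussian array $z\in\nSpace$.

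For part~(c), I would write $\<\tA u,\tA u\>=(1/n)\sum_{i=1}^n [\tA u]_i[\tA u]_i^{\sT}$ as an empirical average of $n$ i.i.d.\ $q\times q$ random matrices whose common mean is $\<u,u\>$ by the covariance calculation in~(a), and then invoke the strong law of large numbers coordinatewise to obtain the almost-sure limit. For part~(d), I would use the orthogonality $w_i^{\sT}w_j=n\delta_{ij}$ to write $P_W=(1/n)DD^{\sT}$, so that $P_W\tA u=Dx$ with $x_i=(1/n)w_i^{\sT}\tA u\in\reals^q$. The same orthogonality makes the Gaussian row vectors $w_i^{\sT}\tA$ mutually independent across $i$, so the $x_i$ are independent centered Gaussians with covariance $(n/N)(1/n^2)u^{\sT}u=(1/n)\<u,u\>=\id_{q\times q}/n$ under the hypothesis $\<u,u\>=\id_{q\times q}$. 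Hence $n\|x\|^2$ is $\chi^2_{dq}$-distributed with $d$ fixed, forcing $\|x\|\to 0$ almost surely as $n\to\infty$.

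There is no real obstacle in this lemma. The mildly delicate point is~(d), where orthogonality of the basis is used twice: once to obtain the clean form $P_W=(1/n)DD^{\sT}$ and once to make the Gaussian vectors $w_i^{\sT}\tA$ independent, so that the $\chi^2_{dq}/n$ representation applies and $d$ is held fixed while $n\to\infty$. Everything else is routine Gaussian bookkeeping and a standard SLLN.
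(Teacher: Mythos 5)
Your proof is correct, and there is nothing in the paper to compare it against: the authors state this lemma without proof, describing it as a standard generalization of Lemma~2 of \cite{BM-MPCS-2011}, so the direct Gaussian computation you give is exactly the expected argument. Three small points are worth tightening. In (b), your calculation naturally yields $\<u,u\>^{1/2}\,\tfrac{1}{n}\sum_i \vz_i\vv_i^{\sT}=\<u,u\>^{1/2}\<z,v\>$, which is the transpose of the $\<v,z\>$ written in the lemma; the two are not equal in distribution as $q\times q$ matrices in general, so this is a transpose slip in the statement (or in your reading of it) that you should flag rather than assert a match. In (c), the limit is taken along a sequence in which $u=u(N)$ and $N$ grow with $n$, so the summands form a triangular array rather than a single i.i.d.\ sequence; the classical SLLN applies only for fixed $N$, and you should instead invoke the triangular-array version (Theorem~\ref{thm:SLLN}), which requires a uniform moment bound, i.e.\ that $\<u,u\>$ stays bounded along the sequence -- true in every application in the paper. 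In (d), the step from $n\|x\|^2\deq\chi^2_{dq}$ to almost-sure convergence needs Borel--Cantelli with the exponential $\chi^2$ tail bound $\prob(\chi^2_{dq}>n\eps)\le e^{-cn}$; this is immediate but should be said. None of these affect the substance of the argument.
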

%
\begin{lemma}[Stein's Lemma \cite{Ste72}]\label{lem:stein}
For jointly gaussian random vectors $Z_1,Z_2 \in \reals^q$ with zero mean, and any function $\varphi:\reals^q\to\reals^{\tq}$ where $\E\{\f{\partial \varphi}{\partial \vz}(Z_1)\}$ and
$\E\{Z_1 [\varphi(Z_2)]^\sT\}$ exist, the following holds
\[
\E\{Z_1[\varphi(Z_2)]^\sT\}=\cov(Z_1,Z_2)\, \E\{[\f{\partial \varphi}{\partial \vz}(Z_2)]^\sT\}\,.
\]
\end{lemma}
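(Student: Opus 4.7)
The plan is to reduce the multivariate statement to two well-known scalar facts: a Gaussian regression decoupling of $Z_1$ from $Z_2$, followed by the classical (scalar) Gaussian integration-by-parts identity applied componentwise to $\varphi$.

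First, assume temporarily that $\Sigma_{22}\equiv\cov(Z_2,Z_2)$ is invertible. Set $A=\cov(Z_1,Z_2)\,\Sigma_{22}^{-1}$ and $W=Z_1-AZ_2$, so that $Z_1=AZ_2+W$. The pair $(W,Z_2)$ is jointly Gaussian, $\E[W]=0$, and by construction $\cov(W,Z_2)=\cov(Z_1,Z_2)-A\Sigma_{22}=0$; therefore $W$ is independent of $Z_2$. Splitting $Z_1$ using this decomposition,
\[
\E\{Z_1[\varphi(Z_2)]^\sT\}=A\,\E\{Z_2[\varphi(Z_2)]^\sT\}+\E[W]\,\E\{[\varphi(Z_2)]^\sT\}=A\,\E\{Z_2[\varphi(Z_2)]^\sT\},
\]
so it remains to prove that $\E\{Z_2[\varphi(Z_2)]^\sT\}=\Sigma_{22}\,\E\{[\partial\varphi/\partial\vz(Z_2)]^\sT\}$.

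For this second step I would fix a scalar component $\varphi_k:\reals^q\to\reals$ of $\varphi$ and apply Gaussian integration by parts to the centered Gaussian density $p(\vz)\propto\exp(-\tfrac12\vz^\sT\Sigma_{22}^{-1}\vz)$. Using the differential identity $\vz\,p(\vz)=-\Sigma_{22}\nabla p(\vz)$ and integrating by parts, one obtains $\E\{Z_2\,\varphi_k(Z_2)\}=\Sigma_{22}\,\E\{\nabla\varphi_k(Z_2)\}$. Reading this as the $k$-th column of a matrix identity and letting $k$ range over $1,\dots,\tq$ yields $\E\{Z_2[\varphi(Z_2)]^\sT\}=\Sigma_{22}\,\E\{[\partial\varphi/\partial\vz(Z_2)]^\sT\}$. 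Combined with the first step this gives
\[
\E\{Z_1[\varphi(Z_2)]^\sT\}=A\,\Sigma_{22}\,\E\{[\partial\varphi/\partial\vz(Z_2)]^\sT\}=\cov(Z_1,Z_2)\,\E\{[\partial\varphi/\partial\vz(Z_2)]^\sT\},
\]
as claimed.

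The main technicality is regularity: the integration by parts needs the boundary terms at infinity to vanish, which holds cleanly for $\varphi\in C^1$ of at most polynomial growth. In the paper's applications $\varphi$ is Lipschitz (hence differentiable almost everywhere), and the standard workaround is to mollify $\varphi$ by convolution with a smooth approximate identity, apply the identity to the smoothed version, and pass to the limit using the hypothesized existence of $\E\{\partial\varphi/\partial\vz(Z_2)\}$ and $\E\{Z_1[\varphi(Z_2)]^\sT\}$ together with Gaussian moment bounds on $\|Z_2\|$. The degenerate case $\det\Sigma_{22}=0$ can be disposed of by perturbing $Z_2$ to $Z_2+\varepsilon G$ with $G$ an independent standard Gaussian and sending $\varepsilon\downarrow0$, or equivalently by restricting attention to the range of $\Sigma_{22}$ where the density makes sense.
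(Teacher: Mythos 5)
Your proof is correct, and it follows the standard route: Gaussian regression to decouple $Z_1=AZ_2+W$ with $W\perp Z_2$, then Gaussian integration by parts via $\vz\,p(\vz)=-\Sigma_{22}\nabla p(\vz)$ applied componentwise, with mollification handling the merely Lipschitz case and a perturbation handling degenerate $\Sigma_{22}$. There is nothing to compare against in the paper: the lemma is stated in the appendix as a reference fact with a citation to Stein's original article and no proof is given, so your argument stands as a valid self-contained derivation. One cosmetic remark: the paper's hypothesis reads $\E\{\frac{\partial\varphi}{\partial\vz}(Z_1)\}$ where it should be $\E\{\frac{\partial\varphi}{\partial\vz}(Z_2)\}$ (an apparent typo), and your proof correctly uses the latter.
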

The following law of large numbers is a generalization of ~\cite[Lemma 4]{BM-MPCS-2011}
and can be proved in a very similar manner.
%
\begin{lemma}\label{lem:SLLN4us}
Let $k\geq 2$ and consider a sequence of vectors $\{v(N)\}_{N\ge 0}$ in $\Space$,
 whose
empirical distribution, denoted by $\empr_{v(N)}$, converges weakly to a probability measure
$p_{V}$ on $\reals^q$, such that $\E_{p_V}(\|V\|^k) < \infty$. Further
assume $\E_{\empr_{v(N)}}(\|V\|^k)\to\E_{p_{V}}(\|V\|^k)$ as $N\to\infty$.
Then, for any pseudo-Lipschitz function $\psi:\reals^q\to\reals$ of order $k$:
\begin{eqnarray}
\lim_{N\to\infty}\f{1}{N}\sum_{i=1}^N\psi(\vv_{i}) \asequal\E\big[ \psi(V)\big]\,.\label{eq:empirical->p_X}
\end{eqnarray}
\end{lemma}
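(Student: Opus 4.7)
The proof is a standard truncation argument, as suggested by the reference to the analogous Lemma~4 of \cite{BM-MPCS-2011}. The plan is as follows. Since $\psi \in \psL(k)$, there is a constant $L' < \infty$ with $|\psi(x)| \le L'(1 + \|x\|^k)$. Fix a continuous cutoff $\chi_R : \reals^q \to [0,1]$ equal to $1$ on $\{\|x\| \le R\}$ and vanishing outside $\{\|x\| \le R+1\}$, and split
\[
\f{1}{N}\sum_{i=1}^N \psi(\vv_i) \;=\; \f{1}{N}\sum_{i=1}^N \psi(\vv_i)\chi_R(\vv_i) \;+\; \f{1}{N}\sum_{i=1}^N \psi(\vv_i)\bigl(1-\chi_R(\vv_i)\bigr).
\]

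For the first (bounded, continuous) piece, weak convergence of $\empr_{v(N)}$ to $p_V$ immediately gives convergence to $\E[\psi(V)\chi_R(V)]$ for each fixed $R$, and dominated convergence (justified by $|\psi(V)| \le L'(1+\|V\|^k)$ together with $\E_{p_V}\|V\|^k < \infty$) yields $\E[\psi(V)\chi_R(V)] \to \E[\psi(V)]$ as $R \to \infty$.

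The essential step is to show that the remainder is arbitrarily small, uniformly in $N$, for $R$ large. Using the pseudo-Lipschitz bound, it suffices to control $\int (1+\|x\|^k)(1-\chi_R(x))\,d\empr_{v(N)}(x)$. This is where the moment-convergence hypothesis $\E_{\empr_{v(N)}}\|V\|^k \to \E_{p_V}\|V\|^k$ is indispensable: applying weak convergence to the bounded continuous function $\|x\|^k\chi_R(x)$ gives $\int \|x\|^k \chi_R\,d\empr_{v(N)} \to \int \|x\|^k \chi_R\,dp_V$; subtracting this identity from the moment-convergence hypothesis produces
\[
\int \|x\|^k (1-\chi_R)\,d\empr_{v(N)} \;\xrightarrow{N\to\infty}\; \int \|x\|^k (1-\chi_R)\,dp_V,
\]
and the right-hand side tends to $0$ as $R\to\infty$ by dominated convergence. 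An analogous (easier) argument treats $\int (1-\chi_R)\,d\empr_{v(N)}$. Together these bounds show that the tail contribution to $\f{1}{N}\sum_i \psi(\vv_i)(1-\chi_R(\vv_i))$ can be made uniformly small for $R$ and $N$ large.

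A standard $\varepsilon/3$ argument then combines the three pieces to conclude $\lim_N \f{1}{N}\sum_i \psi(\vv_i) = \E[\psi(V)]$. The main obstacle is the uniform tail bound, and it is precisely the place where the hypothesis beyond bare weak convergence enters; everything else is routine. In the random setting (as when the lemma is invoked inside the proof of Lemma~\ref{lem:elephantSym}, where the empirical measures depend on the underlying Gaussian matrix), the deterministic argument above applies pathwise on the full-measure event where weak convergence and $k$-th moment convergence of $\empr_{v(N)}$ both hold, yielding the $\asequal$ conclusion.
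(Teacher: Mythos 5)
Your proof is correct: the truncation argument, combined with the observation that weak convergence plus convergence of the $k$-th moments yields uniform integrability of $\|x\|^k$ (and hence of the pseudo-Lipschitz $\psi$, via $|\psi(x)|\le L'(1+\|x\|^k)$) under the empirical measures, is exactly the standard route. The paper gives no proof of this lemma---it defers to Lemma~4 of \cite{BM-MPCS-2011}, which is established in essentially the same way---so your argument matches the intended one and nothing is missing.
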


\bibliographystyle{amsalpha}

\bibliography{all-bibliography}

\end{document}